\newcommand{\R}{\mathbb{R}}
\newcommand{\Z}{\mathbb{Z}}
\newcommand{\N}{\mathbb{N}}
\newcommand{\eps}{\varepsilon}
\newcommand{\al}{\alpha}
\newcommand{\be}{\beta}
\newcommand{\si}{\sigma}
\newcommand{\s}[1]{\mathscr{#1}}
\newtheorem*{conj*}{Conjecture}
\newtheorem{exmp}{Example}
\let\oldexmp\exmp
\renewcommand{\exmp}{\oldexmp\normalfont}
\newtheorem{lem}{Lemma}
\newtheorem{prop}{Proposition}
\newtheorem{thm}{Theorem}
\newtheorem{non-theorem}{Non-Theorem}
\theoremstyle{definition}
\newtheorem{defn}{Definition}
\theoremstyle{remark}
\newtheorem{remark}{Remark}
\title{Hausdorff Dimension of closure of cycles in $d$-maps on the circle}
\author{Nicholas Payne}
\address[N. ~P. ]{Northeastern University, Boston, MA 02115, USA}
\email{payne.n@northeastern.edu}
\author{Mrudul Thatte}
\address[M. ~T. ]{Columbia University, New York, NY 10027, USA}
\email{mrudul@math.columbia.edu}
\date{}
\begin{document}
\maketitle
\begin{abstract}
We study the dynamics of the map $x\mapsto dx (\text{mod }1)$ on the unit circle. We characterize the invariant finite subsets of this map which are called cycles and are graded by their degrees. By looking at the combinatorial properties of the base-d expansion of the elements in the cycles, we prove a conjecture of Curt McMullen that the Hausdorff dimension of the closure of degree-m cycles is equal to $\log m / \log d$. 
\end{abstract}

\section{Introduction}

Degree-$d$ maps on the unit disk have many interesting geometric, topological and analytic properties, which are closely related to hyperbolic geometry. The dynamics of these maps has an important application in classifying dynamical systems generated by polynomials in single complex variable ~\cite{goldberg1992fixed} and it provides useful information about Julia sets and Mandelbrot sets~\cite{keller2000invariant, douady1984etude}. To get an understanding of this family of maps, people studied the behaviour of these maps restricted to the boundary unit circle~\cite{goldberg1992fixed, goldberg1993fixed}. In the paper~\cite{mcmullen2010dynamics}, McMullen gave a complete description of simple (i.e. degree 1) cycles of these boundary maps, showing that those simple cycles are an analogue of simple closed geodesics on hyperbolic surfaces in the sense that the closure of union of simple cycles has Hausdorff Dimension 0 and the closure of union of simple closed geodesics has Hausdorff Dimension 1. 

Here we consider degree-$d$ holomorphic maps from the unit disk onto itself. We focus on one such map $z \mapsto z^d$. The restriction of this map to the boundary circle is equivalent to the map $x\mapsto dx \ \text{(mod 1)}$ on $\R/\Z.$ Combinatorial features of this map has been studied recently in~\cite{PZ, zakeri2021cyclic, tan2022counting}.
In this paper, we study cycles of higher degrees in this map, and in particular, calculate the Hausdorff dimension of their closure, which confirms a conjecture of McMullen. 

\begin{defn}[$d$-map]
 Let $d \in \N$. We define a \textbf{$d$-map} to be the map on the unit circle $S^1 = \R/\Z $ given by 
\begin{equation}
 x\mapsto dx \ \text{(mod 1)}, \forall x \in S^1.
\end{equation}
In other words, if the base-$d$ expansion of a point is $(0. b_1 b_2 b_3....)_d$, then $d$-map takes it to the point whose base-$d$ expansion is $(0. b_2 b_3b_4....)_d$.

We say that a $d$-map has degree $d$.
\end{defn}

\begin{defn}[Cycle]
Let $d$ be a positive integer greater than 1. A finite set $C\subset S^1$ is called a \textbf{cycle} for the $d$-map if and only if the $d$-map restricted to $C$ is a transitive permutation. In terms of the base-$d$ expansion, $C$ is given by
\begin{equation}
C = \{ (0. \overline{b_i b_{i + 1} .... b_n b_1 b_2 ....b_{i-1}})_d | 1 \le i \le n\},
\end{equation}
where $n$ is the size of the cycle and $b_1, b_2, ..., b_n$ are fixed digits in $\{0, 1, ..., d -1 \}$.
\end{defn}
Here are some examples of cycles. Consider the case $d = 2$. Here, $\{0\} = \{(0.\overline{0})_2\}$ is the only 1-element cycle and $\{\frac{1}{3},    \frac{2}{3}\} = \{(0.\overline{01})_2 ,(0.\overline{10})_2\}$ is the only 2-element cycle. For an integer $n > 2$, there are multiple $n$-element cycles. For example, there are two 3-element cycles: $\{\frac{1}{7}, \frac{2}{7}, \frac{4}{7} \}$ and $\{\frac{3}{7}, \frac{5}{7}, \frac{6}{7} \}$.

\newpage

Now we define an important invariant of a cycle, called degree.
\begin{defn}[Degree of a cycle]\label{def:deg}
Let $d$ be a positive integer greater than 1. Let $C$ be a cycle for $d$-map. The \textbf{degree} of $C$ is the smallest non-negative integer $m$ for which there exists a degree-$m$ map  
$f: S^1 \rightarrow S^1$ such that $f$ and the $d$-map agree on $C$. It is denoted by $deg(C)$.
\end{defn}
\begin{remark}
Note that $0 \le deg(C) \le d$ by definition.
\end{remark}
\begin{remark}
The cycles which contain only one element are fixed points of $d$-map and have degree 0. All other cycles have positive degree.
\end{remark}

Now, we give two examples.

\begin{exmp}
Consider $d = 2$ and $C = \{\frac{3}{7}, \frac{5}{7}, \frac{6}{7} \}$.  Under the 2-map, 
$$ \frac{3}{7} \mapsto \frac{6}{7}, \ \frac{5}{7} \mapsto \frac{3}{7} , \  \frac{6}{7} \mapsto \frac{5}{7}. $$
Here the cyclic order of the points in $C$ is preserved. As we jump on the points in the cycle from $ \frac{3}{7} \rightarrow \frac{5}{7} \rightarrow \frac{6}{7} $ (jump over 0) and back to $\frac{3}{7}$, we complete one full circle. As we jump on the images of these points, $\frac{6}{7} \rightarrow$ (jump over 0 to) $\frac{3}{7} \rightarrow \frac{5}{7}$ and back to $\frac{6}{7}$, again we complete one circle. This means that the degree of $C$ is 1. We can show this by explicitly constructing a suitable map $f: S^1 \to S^1$ as follows: 
\[ f(x) = \left\{ 
  \begin{array}{l l}
    x+\frac{3}{7} & \quad \text{if $ x \in \left[\frac{3}{7},  \frac{4}{7}\right] $}\\
    & \\
    3\left(x-\frac{4}{7}\right) & \quad \text{if $ x \in \left[\frac{4}{7},  \frac{5}{7}\right] $}\\
    & \\
    2x-1 & \quad \text{if $ x \in \left[\frac{5}{7},  \frac{6}{7}\right] $}\\
    & \\
   \frac{5}{7} + \frac{1}{4}\left(x-\frac{6}{7}\right)  & \quad \text{if $ x \in \left[\frac{6}{7},  \frac{10}{7}\right] $}\\
   \end{array} \right.\]

Observe that $f$ agrees with the 2-map, and it is of degree 1.  As there are no maps with smaller degree, $deg(C) = 1$.
\end{exmp}

\begin{exmp}
Consider $d = 3$ and $C = \{\frac{1}{5}, \frac{2}{5}, \frac{3}{5}, \frac{4}{5}\}$. Under the 3-map,
$$ \frac{1}{5} \mapsto \frac{3}{5}, \  \frac{2}{5} \mapsto \frac{1}{5}, \  \frac{3}{5} \mapsto \frac{4}{5}, \ \frac{4}{5} \mapsto  \frac{2}{5}.$$
So, as we jump on the points in the cycle from $ \frac{1}{5} \rightarrow \frac{2}{5} \rightarrow \frac{3}{5} \rightarrow \frac{4}{5}$ (jump over 0) and back to $\frac{1}{5}$, we complete one full circle. As we jump on the images of these points, $\frac{3}{5} \rightarrow$ (jump over 0 to) $\frac{1}{5} \rightarrow \frac{4}{5} \rightarrow $ (jump over 0 to) $\frac{2}{5} $ and back to $\frac{3}{5}$, we complete two circles. So, the degree of $C$ is 2. We can show this by explicitly constructing a suitable map $f: S^1 \to S^1$ as follows: 
\[ f(x) = \left\{ 
  \begin{array}{l l}
    \frac{3}{5}+ 4(x - \frac{1}{5}) & \quad \text{if $ x \in \left[\frac{1}{5},  \frac{3}{10}\right] $}\\
    & \\
   2(x - \frac{3}{10})  & \quad \text{if $ x \in \left[\frac{3}{10},  \frac{2}{5}\right] $}\\
       & \\
      3x - 1& \quad \text{if  $x \in \left[\frac{2}{5},  \frac{3}{5}\right]$}\\
      & \\                        
      \frac{4}{5}+ 2(x - \frac{3}{5})  & \quad \text{if  $x \in \left[\frac{3}{5},  \frac{7}{10}\right]$}\\
      & \\                  
      4(x - \frac{7}{10}) & \quad \text{if $ x \in \left[\frac{7}{10},  \frac{4}{5}\right] $}\\
      & \\      
      \frac{2}{5} + \frac{1}{2}\left(x - \frac{4}{5}\right)  & \quad \text{if $ x \in \left[\frac{4}{5},  \frac{6}{5}\right] $}\\
  \end{array} \right.\]

Observe that $f$ agrees with the 3-map, and it is of degree 2.  As there are no maps with smaller degree, $deg(C) = 2$.
\end{exmp}

\newpage

\begin{defn}[Closure of cycles]
Let $m, d \in \N$ with  $1 < d$ and $1 \le m \le d$. We define $E_{m, d}$ as the closure of the union of degree-$m$ cycles for $d$-map.
\begin{equation}
E_{m, d} = \overline{\{ c \in S^1| c \text{ is in a cycle of degree-$m$ for the $d$-map} \}}
\end{equation}
\end{defn}

Curtis McMullen discussed in ~\cite{mcmullen2010dynamics} the simple (i.e. degree 1) cycles for the $d$-map and and computed the Hausdorff Dimension of their closure $E_{1, d}$. Recall that Hausdorff Dimension is defined as follows.
\begin{defn}[Hausdorff Dimension]
The \textbf{Hausdorff Dimension} of a set $E$ is defined as
\begin{equation}
dim_H(E) = \inf\  \{\delta \ge 0 | \ \inf\ \{ \sum {r_i^{\delta}| E \subset \bigcup {B(x_i, r_i)}}\} = 0 \}.
\end{equation}
\end{defn}
\begin{thm}[McMullen]
Let $d$ be a positive integer greater than 1. Then $dim_H(E_{1, d}) =  0.$ 
\end{thm}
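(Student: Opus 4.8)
The plan is to bound the Hausdorff dimension from above by a box-counting estimate driven by a count of admissible digit-strings, and to show that this count grows subexponentially. For $L\ge 1$ let $N(L)$ denote the number of distinct length-$L$ \emph{factors} (consecutive digit-blocks) occurring in the periodic base-$d$ strings $\overline{b_1\cdots b_n}$ of degree-$1$ cycles. If $y\in E_{1,d}$, then $y$ is a limit of points lying in degree-$1$ cycles, so (away from the countably many $d$-adic rationals) its first $L$ digits agree with those of some cycle element and hence form one of these $N(L)$ factors. Covering $E_{1,d}$ by the corresponding $d$-adic intervals of length $d^{-L}$ gives, for every $\delta>0$,
\begin{equation}
\dim_H(E_{1,d}) \le \limsup_{L\to\infty}\frac{\log N(L)}{L\log d}.
\end{equation}
Everything therefore reduces to proving that $N(L)$ is subexponential; in fact I expect it to be polynomial in $L$.

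The combinatorial heart is a description of degree-$1$ cycles. One first shows that $\deg(C)$ equals the combinatorial winding number of the induced cyclic permutation, namely the number of cyclic descents in the sequence of images of $C$ listed in spatial order: a piecewise-monotone interpolation winds forward exactly once per descent, and this is the minimal possible degree. Consequently $\deg(C)=1$ precisely when the $d$-map acts on the $n$ points of $C$, listed in their cyclic order, as a rotation $i\mapsto i+q\ (\mathrm{mod}\ n)$ with $\gcd(q,n)=1$, as illustrated in Example 1. Conjugating the cyclic order of $C$ to the positions $\{0,\tfrac1n,\dots,\tfrac{n-1}n\}$ turns the $d$-map into the rigid rotation $R_\alpha$ by $\alpha=q/n$, while the base-$d$ digit of a point records which of the $d$ arcs (with boundaries $\theta_1<\dots<\theta_{d-1}$ in $\tfrac1n\Z$) contains it. Thus the digit sequence of each point of a degree-$1$ cycle is exactly the coding of a rational rotation by a partition of the circle into $d$ arcs, and the points of $E_{1,d}$ are the codings of arbitrary (including irrational) rotations.

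It remains to count the factors of such codings. For a \emph{fixed} rotation number $\alpha$ and fixed arc-boundaries, the length-$L$ factors are in bijection with the cells of the refined partition $\bigvee_{j=0}^{L-1}R_\alpha^{-j}\mathcal P$, where $\mathcal P$ is the $d$-arc partition; since each of the $d-1$ boundaries contributes one new cut per iterate, there are at most $(d-1)L$ cells, hence at most $(d-1)L$ factors. The decisive—and hardest—step is to control the union of factor sets over all rotation numbers and all admissible boundary placements. Here a length-$L$ word is realizable exactly when a system of $O(dL)$ linear inequalities in the $d+1$ real parameters $(\alpha,\text{phase},\theta_1,\dots,\theta_{d-1})$ is consistent, so the number of realizable words equals the number of nonempty cells of the associated hyperplane arrangement in parameter space. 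By a Warren/Milnor--Thom-type arrangement bound this is $O(L^{d+1})$, polynomial in $L$; for $d=2$ it recovers the classical cubic count of Sturmian factors, $N(L)\sim cL^{3}$. With $N(L)=\mathrm{poly}(L)$ the limit superior above vanishes, yielding $\dim_H(E_{1,d})=0$. The genuine obstacle is this last quantitative estimate: upgrading the clean linear bound valid for a single rotation number to a subexponential bound uniform over all rotation numbers is where the real content lies, while the box-dimension reduction and the rotation characterization of degree-$1$ cycles are comparatively soft.
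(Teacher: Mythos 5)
Your proposal is essentially viable, but it takes a genuinely different route from the paper. Note first that the paper never proves this statement in isolation: it cites McMullen for Theorem 1 and then recovers it as the $m=1$ case of its general upper bound in Section 3. There the engine is a count of periodic \emph{objects}, not of words: Lemma \ref{cycle construct} shows that a degree-$m$ cycle is reconstructed from its generated partition, crossing data, and Digit Portrait, whence there are at most $O(n^{d-m+1}m^{n-1})$ degree-$m$ cycles with $n$ elements (polynomially many when $m=1$); this is extended to precycles, and $E_{m,d}$ is covered by $O(d^{-n})$-neighborhoods of precycle points via the expansion estimate $|(f^n)'(c)|=d^n$ for a degree-$m$ extension $f$ of the $d$-map. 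You instead cover $E_{1,d}$ by depth-$L$ base-$d$ cylinders and bound the factor complexity $N(L)$ of the language of degree-$1$ cycles, using the rotation characterization --- which is exactly the $m=1$ content of Lemma \ref{cross, deg}: a single crossing forces the induced permutation to preserve cyclic order, hence act as $i\mapsto i+q \pmod n$ --- together with a cell-counting bound over the parameter space of rotations and arc boundaries. Your route avoids precycles and the derivative argument entirely and yields quantitative word-complexity information; the paper's route has the advantage of generalizing verbatim to all $m$, where the language has positive entropy $\log m$ and pure word-counting would not by itself produce the sharp constant without the same cycle-counting input.

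One step of yours needs repair as stated. The realizability conditions for a length-$L$ word are not literally linear in $(\alpha,x,\theta_1,\dots,\theta_{d-1})$, because the orbit points are $x+j\alpha \pmod 1$; the reductions mod $1$ introduce integer parts. The fix is routine: work with the hyperplanes $x+j\alpha-k=\theta_i$ and $x+j\alpha=k$ for $0\le k\le j\le L-1$, an arrangement of $O(dL^2)$ hyperplanes in $\R^{d+1}$, so the Warren/Milnor--Thom cell bound gives $N(L)=O\bigl(L^{2(d+1)}\bigr)$ rather than your $O(L^{d+1})$. This is still polynomial, so your conclusion $\dim_H(E_{1,d})=0$ stands. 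Two smaller points: a partition of the circle into $d$ arcs has $d$ boundary points (your phase parameter supplies the $d$-th, so the per-iterate cut count is $d$, not $d-1$), and factors of the periodic word that wrap past the period are themselves codings of rotation orbit segments started at a shifted point, so they are covered by the same count.
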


In this paper, we generalize the above results by showing the following.
\begin{thm}
Let $m, d \in \N$ with  $1 < d$ and $1 \le m \le d$. Then,
$$dim_H(E_{m, d}) = \frac {\log m}{\log d}.$$
\end{thm}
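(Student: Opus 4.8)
The plan is to prove the two inequalities $dim_H(E_{m,d}) \le \log m/\log d$ and $dim_H(E_{m,d}) \ge \log m/\log d$ separately, both resting on a combinatorial description of the degree. First I would record the following intrinsic characterization. Writing the points of a cycle in their counterclockwise order as $c_1 < c_2 < \cdots < c_n$, the $d$-map $f$ induces a cyclic permutation $\pi$ by $f(c_i) = c_{\pi(i)}$, and I claim $\deg(C)$ equals the number of cyclic descents of $\pi$, i.e. the number of indices $i \in \Z/n\Z$ with $f(c_{i+1}) < f(c_i)$. Indeed, any circle map agreeing with $f$ on $C$ must wind around at least once for each such descent, while the piecewise-monotone interpolation achieves exactly this winding number; this is the content of the two worked examples. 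Crucially, this count depends only on the configuration $(C, f|_C)$ and not on $d$, and the $m$ descents cut $C$ into $m$ arcs on each of which $f$ is orientation-preserving and which each map onto all of $S^1$ — a degree-$m$ full-branch (Markov) structure that will drive both bounds.

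For the lower bound I would exhibit a self-similar subset. Let $K \subset S^1$ be the set of points all of whose base-$d$ digits lie in $\{0, 1, \ldots, m-1\}$. Then $K$ is the attractor of the $m$ contractions $x \mapsto (x+j)/d$, $0 \le j \le m-1$, which satisfy the open set condition, so $dim_H(K) = \log m/\log d$ by the standard similarity-dimension computation. It remains to show $K \subseteq E_{m,d}$. A cycle all of whose digits are $< m$ is simultaneously a cycle for the $m$-map, and since the degree is intrinsic, such a cycle has degree $m$ for the $d$-map exactly when it has full (maximal) degree for the $m$-map. Because every finite word over $\{0,\ldots,m-1\}$ can be completed to a periodic word whose cycle has exactly $m$ descents, these degree-$m$ cycles are dense in $K$, whence $K \subseteq E_{m,d}$ and $dim_H(E_{m,d}) \ge \log m/\log d$.

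For the upper bound, which I expect to be the crux, I would bound the number $F_k$ of distinct length-$k$ base-$d$ prefixes occurring among points of degree-$m$ cycles and show $F_k \le P(k)\, m^k$ for some polynomial $P$. Granting this, $E_{m,d}$ is covered by $F_k$ of the $d$-adic intervals of length $d^{-k}$, and for any $\delta > \log m/\log d$ one has $\sum_i r_i^\delta \le F_k \, d^{-k\delta} \le P(k)\,(m\,d^{-\delta})^k \to 0$ as $k \to \infty$, so by the definition of Hausdorff Dimension $dim_H(E_{m,d}) \le \delta$, giving the claim. The count $F_k$ is controlled by the full-branch structure above: in counterclockwise order the first digits are non-decreasing, while the $m$ descents of $\pi$ cut the cycle into $m$ orientation-preserving arcs, and on each such arc the images sweep monotonically once around $S^1$, so that the second digits are non-decreasing along the arc; the same constraint then recurses to all deeper digits within the nested arcs. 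Organizing this recursion — for instance through a transfer-matrix count on the arc structure, or through a monotone semiconjugacy collapsing each arc onto a branch of the $m$-map $x \mapsto mx$ — should yield $F_k = \Theta(m^k)$.

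The main obstacle is precisely this last step. The subtlety is that a degree-$m$ cycle may use all $d$ base-$d$ digits even when $m < d$ — the cycle $\{1/5,2/5,3/5,4/5\}$ for $d=3$ already has degree $2$ yet realizes all three first digits — so a naive digit count gives only the useless bound $F_k \le d^k$. One must show that the ``extra'' branches are entered only in the order-preserving, non-winding manner dictated by the lap structure, so that the true exponential growth rate of $F_k$ is $m$ rather than $d$. Making this rigorous — equivalently, showing that the $d$-map restricted to $E_{m,d}$ has topological entropy exactly $\log m$ and transferring this estimate to the base-$d$ metric — is the heart of the argument; the remaining steps (the self-similar lower bound and the passage from the prefix count to Hausdorff Dimension) are routine.
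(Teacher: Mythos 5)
Your lower bound is sound and is essentially the paper's argument: your descent characterization of the degree is the paper's Lemma \ref{cross, deg} (degree equals crossing number), your set $K$ is the paper's $A_{m,d}$ (the paper computes $dim_H(A_{m,d}) = \log m/\log d$ by a direct covering and mass-distribution count rather than invoking the open set condition, which changes nothing), and your claim that every finite word over $\{0,\dots,m-1\}$ can be completed to a periodic word whose cycle has exactly $m$ descents is precisely what the paper's Lemma \ref{condition E_m,d} proves, by appending the padding blocks $\langle b_m\rangle\langle b_1\rangle\langle b_{m-1}\rangle\langle b_1\rangle\cdots\langle b_2\rangle\langle b_1\rangle b_m$ and checking that each block boundary forces a crossing while Lemma \ref{cross, dig} caps the crossing number at $m$. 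You assert this completion step rather than prove it, but it is routine given your descent characterization.

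The upper bound, however, contains a genuine gap, and you have correctly located it yourself: the estimate $F_k \le P(k)\,m^k$ on length-$k$ prefixes is exactly what needs proof, and the mechanisms you sketch (a transfer-matrix count on the arc structure, or a monotone semiconjugacy onto the $m$-map) are not carried out. There is also a structural difficulty your outline must confront: degree-$m$ cycles of period $n \gg k$ contribute length-$k$ prefixes too, and summing any per-cycle bound over all periods diverges, so $F_k$ cannot be assembled cycle-by-cycle. The paper sidesteps the subword-growth question entirely with two ideas absent from your proposal. First, a rigidity statement (Lemma \ref{cycle construct}): a degree-$m$, $n$-element cycle is uniquely determined by the partition it generates, the position $i_1$ of one crossing, and its Digit Portrait; counting these data gives at most $O(n^{d-m+1}m^{n-1})$ such cycles (Lemma \ref{count cycles}), and a parallel count for precycles (Lemma \ref{count precycles}). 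Second, an approximation step borrowed from McMullen: if $f$ is a degree-$m$ circle map agreeing with the $d$-map on a degree-$m$ cycle, then $|(f^n)'(c)| = d^n$ along the cycle, which forces every point of every degree-$m$ cycle --- of arbitrarily large period --- to lie within $O(d^{-n})$ of a precycle of size at most $n$ and degree at most $m$. Hence $E_{m,d}$ is covered by $O(n^{d-m+4}m^{n})$ intervals of length $O(d^{-n})$, and the $\beta$-sum tends to $0$ for every $\beta > \log m/\log d$. If you wish to salvage your outline, the shortest repair is to adopt this precycle-approximation step: it converts the hard problem of controlling long cycles into the tractable one of counting short precycles, for which your arc/descent combinatorics is well suited; as written, your proof of the upper bound --- and hence of the theorem --- is incomplete.
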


This paper is organized in the following way: 

In Section 2, we get a lower bound on the Hausdorff Dimension of $E_{m, d}$ by calculating the Hausdorff Dimension of a subset of  $E_{m, d}$. 

In Section 3, we prove the upper bound by imitating the proof of Theorem 1 in paper ~\cite{mcmullen2010dynamics} using combinatorial arguments, and conclude by proving Theorem 2.

\section{Lower Bound}

In this section, we define and study two useful invariants of a cycle called crossing number and Digit Portrait, which are directly related to the degree of the cycle. Then we use these properties to construct a subset of $E_{m, d}$ which has Hausdorff Dimension $\frac {\log m}{\log d}$.

\begin{defn}[Crossing]
Let $d$ be a positive integer greater than 1. Let $C =\{ c_1, c_2,..., c_n\}$ be a cycle for $d$-map such that  
$$0 < c_1 < c_2 <.... < c_n < 1$$ 
Let $c_{n + 1} = c_1$. For any $ 1 \le i \le n$, the pair $(c_i, c_{ i + 1})$ is called a \textbf{crossing generated by} $\mathbf{C}$ (or simply a \textbf{crossing}) iff 
\begin{equation}
0 < dc_{i + 1} \text{(mod 1)} < dc_i \text{(mod 1)} < 1
\end{equation}
The total number of such crossings is called the \textbf{crossing number} of C. 
\begin{equation}
\eta(C) = \text{\# (crossings generated by C)}
\end{equation}
\end{defn}

\begin{remark}
If $n = 1$, then there are no crossings. The crossing number is 0 which is the degree of 1-element cycles.
\end{remark}

\begin{remark}
As we jump on $S^1$ in the counterclockwise direction and trace the points of the cycle from $c_1 \rightarrow c_2 \rightarrow .... \rightarrow c_n$ and (jump over 0) back to $c_1$, we complete one full circle. When we trace the images of these points, $dc_1 \text{(mod 1)} \rightarrow dc_2 \text{(mod 1)} \rightarrow ...... \rightarrow dc_n \text{(mod 1)}$ and back to $dc_1 \text{(mod 1)}$, we may jump over 0 multiple times. Each time we jump over 0, we have a crossing. The crossing number of the cycle is equal to its degree. We provide a rigorous proof of this intuitive result below.
\end{remark}

\newpage

\begin{lem}\label{cross, deg} 
Let $d$ be a positive integer greater than 1. Let $C =\{ c_1, c_2,..., c_n\}$ be a cycle for $d$-map with $ 0 < c_1 < c_2 <.... < c_n < 1$. Then, the crossing number of $C$ is equal to its degree or $$\eta(C) = deg(C)$$
\end{lem}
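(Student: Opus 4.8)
The plan is to read ``degree-$m$ map'' as an orientation-preserving (equivalently, monotone non-decreasing) continuous self-map of $S^1$ of topological degree $m$. This is the natural class here, since the $d$-map is the boundary restriction of $z \mapsto z^d$ and is itself monotone increasing, and the worked examples interpolate the cycle by piecewise-linear maps of positive slope. Monotonicity is precisely what makes the degree bounded below: without it one could interpolate the finitely many prescribed values $c_i \mapsto dc_i$ by a null-homotopic map and reach degree $0$, contradicting the examples. With this class fixed, the degree of such an $f$ equals the total positive winding of its values around the circle as $x$ traverses $S^1$ once, and I will compute this winding arc by arc between consecutive cycle points.

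Write $y_i = dc_i \bmod 1 \in (0,1)$ for the image of $c_i$, and set $c_{n+1}=c_1$, $y_{n+1}=y_1$; the $y_i$ are distinct (the $d$-map permutes $C$) and lie in $(0,1)$ since a transitive cycle of size $\ge 2$ avoids the fixed point $0$. Lift $f$ to a non-decreasing $\tilde f:\R\to\R$ with $\tilde f(x+1)=\tilde f(x)+m$, normalized so $\tilde f(c_1)=y_1$. Because $\tilde f$ is non-decreasing and $c_1<\cdots<c_n<c_1+1$, each increment $\tilde f(c_{i+1})-\tilde f(c_i)$ is nonnegative and congruent to $y_{i+1}-y_i \pmod 1$; writing it as $(y_{i+1}-y_i)+k_i$ with $k_i\in\Z$, the telescoping identity $\sum_{i=1}^n (y_{i+1}-y_i)=0$ yields $m=\sum_{i=1}^n k_i$.

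The heart of the argument is then a local analysis of each $k_i$. Monotonicity forces $(y_{i+1}-y_i)+k_i\ge 0$, i.e. $k_i\ge y_i-y_{i+1}$. On a non-crossing arc one has $y_{i+1}>y_i$, so $y_i-y_{i+1}\in(-1,0)$ and hence $k_i\ge 0$; on a crossing arc (where by definition $y_{i+1}<y_i$) one has $y_i-y_{i+1}\in(0,1)$, forcing $k_i\ge 1$. Summing, $m=\sum k_i\ge \#\{\text{crossing arcs}\}=\eta(C)$, and therefore $\deg(C)\ge\eta(C)$.

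For the reverse inequality I will exhibit a map attaining the bound: take $k_i=1$ on each crossing arc and $k_i=0$ otherwise, which makes every increment $\tilde f(c_{i+1})-\tilde f(c_i)$ strictly positive, interpolate linearly on $[c_1,c_1+1]$ (as in the two worked examples), and extend by $\tilde f(x+1)=\tilde f(x)+\eta(C)$. This descends to a monotone degree-$\eta(C)$ circle map agreeing with the $d$-map on $C$, giving $\deg(C)\le\eta(C)$ and hence equality; the degenerate case $n=1$ is the fixed point, where $\eta=\deg=0$. I expect the main obstacle to be the first paragraph---fixing the correct class of admissible maps and justifying that the topological degree of a monotone interpolant equals its total number of positive windings---rather than the combinatorial bookkeeping, which is then routine.
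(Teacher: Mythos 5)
Your proof is correct, but your argument for the inequality $\deg(C)\ge\eta(C)$ takes a genuinely different route from the paper's. The paper argues by pigeonhole: a degree-$m$ map agreeing with the $d$-map on $C$ decomposes $S^1$ into $m$ intervals $I_1,\dots,I_m$, on each of which there is a continuous non-decreasing branch into $[0,1)$; if $\eta(C)>m$, some crossing pair $(c_{i_t},c_{i_t+1})$ lands inside a single $I_r$, contradicting monotonicity of that branch. You instead lift $f$ to $\tilde f\colon\R\to\R$ with $\tilde f(x+1)=\tilde f(x)+m$, write each increment $\tilde f(c_{i+1})-\tilde f(c_i)=(y_{i+1}-y_i)+k_i$ with $k_i\in\Z$, telescope to get $m=\sum_i k_i$, and use monotonicity to force $k_i\ge 1$ on crossing arcs and $k_i\ge 0$ elsewhere. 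Your version buys two things: it avoids the interval-decomposition claim, which the paper asserts without proof, and it isolates exactly where the class of admissible maps matters --- you are right that the definition of degree must implicitly restrict to monotone (non-decreasing) circle maps, since otherwise every cycle could be interpolated by a null-homotopic map of degree $0$; the paper leaves this unstated while tacitly using it via the non-decreasing $g_r$'s. What the paper's approach buys is a shorter, more geometric argument that never leaves the circle. For the reverse inequality $\deg(C)\le\eta(C)$ the two proofs are essentially the same: both construct a piecewise-linear monotone map that winds once around $S^1$ per crossing (the paper sends the midpoints of the crossing gaps to $0$; your choice $k_i\in\{0,1\}$ with linear interpolation between consecutive cycle points amounts to the same construction), and both handle $n=1$ trivially.
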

\begin{proof}
The case $n = 1$ is obvious, so assume $n > 1$ and let $c_{n + k} = c_k$, for all $k \in \N.$

Let $1 \le i_1 < i_2 < ... < i_{\eta(C)} \le n $ be such that for all $1 \le t \le \eta(C)$, the pair $(c_{i_t}, c_{i_t + 1}) $ is a crossing generated by $C$. 

First, we prove $deg(C) \ge \eta(C)$.  Assume for the sake of argument that $\eta(C) > deg(C)$. We can divide $S^1$ or $[0, 1)$ in intervals $I_1, I_2, ... , I_{deg(C)}$ such that on each $I_r$, there exists a continuous non-decreasing map $g_r$ to $[0, 1)$ which agrees with the $d$-map on  $I_r \cap C$. \\

Note that there are exactly $\eta(C)$ sets of the type $\{c_{i_t}, c_{i_t + 1}, ... c_{i_{t + 1}}\}.$   By our assumption that $\eta(C) > deg(C),$ we can find a $t$ such that $\{c_{i_t}, c_{i_t + 1}, ... c_{i_{t + 1}}\} \subset I_r $ for some $r$. In other words, 
$$ 0 <  g_r(c_{i_t + 1}) = dc_{i_t + 1} \text{(mod 1)} < dc_{i_t} \text{(mod 1)} = g_r(c_{i_t})< 1,$$
a contradiction of the non-decreasing nature of $g_r$. Thus, $deg(C) \ge \eta(C)$.\\

Now we prove that $deg(C) \le \eta(C)$ by constructing an appropriate map $f : S^1 \rightarrow S^1$ of degree $\eta(C)$.  To start, we divide $S^1$ into $\eta(C)$ intervals of the type $\left[\frac{c_{i_t} + c_{i_t + 1}}{2}, \frac{c_{i_{t + 1}} + c_{{i_{t + 1}}  + 1}}{2} \right]$, each of which is mapped onto $S^1.$ We take the endpoints of each such interval to 0, and points of $C$ to their images under the $d$-map. In between, we make $f$ linear. We define
\[ f(x) = \left\{ 
  \begin{array}{l l}
     \frac{dc_{i_t + 1} \text{(mod 1)}}{\frac{c_{i_t + 1} - c_{i_t}}{2}} \ ( x - \frac{c_{i_t} + c_{i_t + 1}}{2}) & \quad \text{if $ x \in \left[\frac{c_{i_t} + c_{i_t + 1}}{2},  c_{i_t + 1}\right] $}\\
       & \\
      dc_i \text{(mod 1)} + {\frac{ dc_{i + 1} \text{(mod 1)} - dc_i \text{(mod 1)}}{c_{i + 1} - c_i}}\ ( x - c_i) & \quad \text{if  $x \in \left[c_i, c_{i + 1}\right] \subset \left[c_{i_t + 1}, c_{i_{t + 1}}\right]$}\\
      & \\                        
      dc_{i_{t + 1}} \text{(mod 1)} + {\frac{1 -  dc_{i_{t + 1}} \text{(mod 1)}}{\frac{c_{i_{t + 1} + 1} - c_{i_{t + 1}}}{2}}} \ (x - c_{i_{t + 1}}) & \quad \text{if } x \in \left[c_{i_{t + 1}}, \frac{c_{i_{t + 1}} + c_{{i_{t + 1}}  + 1}}{2} \right] 
  \end{array} \right.\]
where intervals are taken in counter-clockwise direction.

So, $f$ is a continuous function for all $t$, and the restriction of $f$ gives a bijection
\[\left[\frac{c_{i_t} + c_{i_t + 1}}{2}, \frac{c_{i_{t + 1}} + c_{{i_{t + 1}}  + 1}}{2}\right) \leftrightarrow S^1.\]

 Since $f$ has degree $\eta(C)$ and $f|_C$ agrees with the $d$-map, by Definition \ref{def:deg}, $deg(C) \le \eta(C).$  This, combined with above, completes the proof.

\end{proof}

Now that we have established the relation between the degree and the crossing number of a cycle, we need a tool to estimate the crossing number. We observe that the crossing number of a cycle is related to the order of points in the cycle, and hence the digits in the base-$d$ expansion of points of the cycle. We define an invariant of the cycle called Digit Portrait which characterizes these digits.

\begin{defn}[Digit Portrait]
Let $d$ be a positive integer greater than 1. Let $C$ be a cycle for $d$-map. The \textbf{Digit Portrait} of $C$ is the non-decreasing map 
$F: \{ 0, 1, 2, ..., (d - 1) \} \rightarrow  \{ 0, 1, 2, ..., |C| \} $ which satisfies 
$$F(j) = |\ C\ \cap \ [0, (j + 1)/d)\ | \ \ \forall \ 0 \le j \le (d - 1) $$ 
or
$$F(j) = \text{\# (elements of $C$ whose base-$d$ expansion starts with a digit less than $j + 1$)}.$$ 

Let $dig(C)$ be the number of distinct positive values taken by $F$. Note that if a digit $j$ is absent in the base-$d$ expansion of a point in $C$, then $F(j) = 0$ or $F(j) = F(j - 1)$. So, $dig(C)$  is also the number of distinct digits which appear in the base-$d$ expansion of any point in $C$. To estimate the crossing number of $C$, we need the second interpretation of $dig(C)$.
\end{defn}

\begin{exmp}
Consider $d = 4$ and the cycle $C = \{ (0 . \overline{0012})_4,   (0 . \overline{0120})_4, (0 . \overline{1200})_4,  (0 . \overline{2001})_4 \}$. Note that
$$ 0 = (0.0)_4 < (0 . \overline{0012})_4 <  (0 . \overline{0120})_4 < \frac{1}{4} = (0.1)_4 < (0 . \overline{1200})_4 < \frac{2}{4} = (0.2)_4 < (0 . \overline{2001})_3 < \frac{3}{4} = (0.3)_4$$

 The Digit Portrait of $C$ is the map $F: \{0, 1, 2, 3\} \rightarrow \{0, 1, 2, 3, 4\}$ given by:
$$F(0) = 2, F(1) = 3, F(2) = 4, F(3) = 4.$$

$F$ takes the values 2, 3 and 4. So, $dig(C)$ is 3. There are exactly 3 digits (0, 1 and 2) which appear in the base-4 expansion of the points in $C$. 
\end{exmp}

Now we establish the relation between $dig(C)$ and the crossing number of $C$.

\begin{lem}\label{cross, dig}
Let $d$ be a positive integer greater than 1, and $C =\{ c_1, c_2,..., c_n\}$ be a cycle for the $d$-map with $ 0 < c_1 < c_2 <.... < c_n < 1$ and $n > 1$. Then, the crossing number of $C$ is at most the number of distinct digits which appear in the base-$d$ expansion of a point in $C$.  In other words, $$\eta(C) \le dig(C).$$
\end{lem}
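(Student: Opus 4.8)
My plan is to translate the analytic crossing condition into an elementary statement about the leading base-$d$ digits of the sorted points, and then bound the number of crossings by the number of ``jumps'' in the (non-decreasing) sequence of leading digits. Write $a_i = \lfloor d c_i \rfloor \in \{0,1,\dots,d-1\}$ for the first digit of $c_i$, so that $dc_i \,\text{(mod 1)} = d c_i - a_i$; since every point of the cycle lies in $(0,1)$ and maps into the cycle, each $dc_i \,\text{(mod 1)}$ lies in $(0,1)$ and this identity is unambiguous. Because $c_1 < c_2 < \dots < c_n$, the leading digits satisfy $a_1 \le a_2 \le \dots \le a_n$. The first step is to observe that for an internal pair $(c_i, c_{i+1})$ with $1 \le i \le n-1$, the crossing inequality $dc_{i+1} \,\text{(mod 1)} < dc_i \,\text{(mod 1)}$ rewrites as $d(c_{i+1} - c_i) < a_{i+1} - a_i$. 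As $c_{i+1} > c_i$ the left-hand side is strictly positive, forcing $a_{i+1} > a_i$. Thus every internal crossing occurs at a position where the leading digit strictly increases.

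Next I would count how many such positions there can be. In a non-decreasing sequence $a_1 \le \dots \le a_n$ the number of indices $i$ with $a_i < a_{i+1}$ is exactly $|\{a_1,\dots,a_n\}| - 1$. To identify this set of leading digits with the set counted by $dig(C)$, I would use the cyclic-shift description of the cycle: the point obtained by starting the periodic word at position $i$ has leading digit $b_i$, so the multiset $\{a_1,\dots,a_n\}$ of leading digits of the sorted points is exactly the multiset $\{b_1,\dots,b_n\}$ of all digits appearing in the expansions. Hence the number of distinct leading digits equals $dig(C)$, and the number of internal crossings is at most $dig(C) - 1$.

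Finally, only the single wrap-around pair $(c_n, c_1)$ remains unaccounted for, and it can contribute at most one further crossing. Combining, $\eta(C) \le (dig(C) - 1) + 1 = dig(C)$, as desired. I expect the routine reductions (rewriting the crossing inequality and counting jumps) to be straightforward; the one point that needs care — and which I view as the crux — is the identification of the leading digits of the sorted points with the full digit set counted by $dig(C)$, i.e.\ verifying that no digit appearing deep inside some expansion fails to appear as a leading digit of some cyclic shift. This follows because the cycle is precisely the set of cyclic shifts of one periodic word, so every digit of that word is the leading digit of exactly the shift beginning at its position.
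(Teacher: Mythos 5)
Your proof is correct and takes essentially the same approach as the paper: both arguments show that an internal crossing $(c_i,c_{i+1})$ forces the leading base-$d$ digit to strictly increase, bound the number of such jumps in the sorted sequence by $dig(C)-1$, and allow one additional crossing for the wrap-around pair $(c_n,c_1)$. Your explicit check that every digit of the periodic word occurs as the leading digit of some cyclic shift is a detail the paper absorbs into its definition of $dig(C)$ rather than a genuinely different idea.
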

\begin{proof}
Let $1 \le i < n$. Let $c_i \in  [j_1/d, (j_1 + 1)/d)$ and $c_{i + 1} \in [j_2/d, (j_2 + 1)/d)$. In other words, the base-$d$ expansions of $c_i$ and $c_{i + 1}$ begin with the digits $j_1$ and $j_2,$ respectively. Note that $j_1 \le j_2 $ because $c_i < c_{i+1} $.

If $j_1 = j_2 = j$, then 
$$ 0 < dc_i \text{(mod 1)} =  dc_i - j  < dc_{i + 1} - j  = dc_{i + 1} \text{(mod 1)}< 1.$$

In this case, $ (c_i, c_{i + 1})$ cannot be a crossing. So, $(c_i, c_{i + 1})$ is a crossing only if $j_1 < j_2,$
 i.e., $c_i$ is the largest element of $ C \cap \ [j_1/d, (j_1 + 1)/d)$ and $c_{i + 1}$ is the smallest element of $ C\ \cap \ [j_2/d, (j_2 + 1)/d)$.

Thus, there are at most $(dig(C) - 1) \ i$'s for which  $1 \le i < n$ and  $ (c_i, c_{i + 1})$ is a crossing. For some cycles, $(c_n, c_1)$ is an additional crossing. So, there are at most $dig(C) \ i$'s for which  $1 \le i \le n$ and  $ (c_i, c_{i + 1})$ is a crossing.
\end{proof}

Together, Lemma \ref{cross, deg} and Lemma \ref{cross, dig} give a way of estimating the degree of a cycle by looking at the digits in the base-$d$ expansion of a point in the cycle. Now we use this to get a sufficient condition for a point to be in the closure of the cycles of fixed degree.

\begin{lem}\label{condition E_m,d}
Let $m, d \in \N$ with $1 \le  m \le  d$ and $1 < d$. Then, any point in $S^1$ whose base-$d$ expansion contains at most $m$ distinct digits lies in $E_{m, d}$.
\end{lem}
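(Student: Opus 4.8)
The plan is to show $x \in E_{m,d}$ by constructing, for every $N$, a cycle of degree exactly $m$ that contains a point agreeing with $x$ in its first $N$ base-$d$ digits; since two points that agree to $N$ digits differ by at most $d^{-N}$, letting $N \to \infty$ places $x$ in the closure $E_{m,d}$. Write $x = (0.b_1 b_2 b_3 \cdots)_d$ and let $S$ be its set of digits, so $|S| \le m$. Because $m \le d$, I may fix a digit set $T \supseteq S$ with $|T| = m$ (for $m \ge 2$). I will realize the approximating cycles as the set of cyclic shifts of a periodic word $W_N = b_1 \cdots b_N\, G_N$, where $G_N$ is a finite gadget written in the alphabet $T$; the shift of $W_N$ beginning at position $1$ is the desired point near $x$.

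The two lemmas already proved reduce the degree computation to counting crossings. By Lemma \ref{cross, deg}, $\deg(C_N) = \eta(C_N)$, and by Lemma \ref{cross, dig}, $\eta(C_N) \le dig(C_N)$. Since every digit of $W_N$ lies in $T$, we get $dig(C_N) \le m$ for free, hence $\deg(C_N) \le m$ with no work. Thus the entire burden is the reverse inequality: I must choose $G_N$ so that $\eta(C_N) \ge m$, i.e. so that the sorted cyclic list of cycle points $c_1 < \cdots < c_n$ contains $m$ crossings $0 < dc_{i+1}\,\text{(mod 1)} < dc_i\,\text{(mod 1)} < 1$. As in the proof of Lemma \ref{cross, dig}, a crossing can only occur at a transition between two leading-digit blocks, and with $|T| = m$ there are at most $m$ such block boundaries; so $\eta(C_N) \ge m$ is equivalent to forcing \emph{every} digit to appear and \emph{every} block boundary to be a crossing, which means arranging the shift map $c \mapsto dc\,\text{(mod 1)}$ to wrap around the circle at each change of leading digit.

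The main obstacle is producing such a $G_N$ uniformly in the prefix. The prefix $b_1 \cdots b_N$ is arbitrary among $\le m$-digit words, and whether a given boundary becomes a crossing is governed not by the leading digits alone but by the full tails of the competing shifts; small experiments show the same suffix can yield different degrees for different prefixes (for $d = 2$, appending $01$ produces a degree-$2$ cycle from the prefix $1010$ but only a degree-$1$ cycle from the prefix $11$). Consequently $G_N$ must be chosen adaptively. My plan is to let $G_N$ begin with a long, highly structured run (for instance a long run of the smallest digit of $T$) whose purpose is to trivialize the tails of all shifts that start inside the prefix, so that their relative order—and hence the ascent/descent pattern of $c \mapsto dc\,\text{(mod 1)}$ across the prefix—is completely determined rather than accidental; I would then append at the very end a short rotation-type pattern cycling once through all $m$ levels $a_1 < \cdots < a_m$ of $T$, engineered so that, together with the controlled prefix, the shift map performs exactly $m$ wraps. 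Establishing that this construction gives $\eta(C_N) = m$—by explicitly writing down the sorted order of all shifts of $W_N$ and verifying a crossing at each of the $m$ block boundaries—is the technical heart of the argument.

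Finally, granting the construction, the point $(0.\overline{b_1 \cdots b_N\, G_N})_d$ lies in the degree-$m$ cycle $C_N$ and within $d^{-N}$ of $x$, so $x \in E_{m,d}$. The degenerate case $m = 1$ (points with a single repeated digit, i.e. the fixed points $a/(d-1)$) falls outside the $|T| = m$ framework, since one digit can never create a crossing; here one approximates instead by the two-digit cycles $(0.\overline{a^k b})_d$ and checks directly that each has exactly one crossing, giving degree $1$. This completes the plan.
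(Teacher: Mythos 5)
Your setup coincides with the paper's: approximate the point by the purely periodic point whose repeating word is a long prefix of its expansion followed by a gadget over an $m$-letter alphabet, obtain $\deg(C_N)=\eta(C_N)\le dig(C_N)\le m$ for free from Lemmas \ref{cross, deg} and \ref{cross, dig}, and then force $\eta(C_N)\ge m$ by making each of the $m$ leading-digit block boundaries a crossing. Your observation that the gadget must be adapted to the prefix is correct, and your separate treatment of $m=1$ via the cycles $(0.\overline{a^k b})_d$ is actually more careful than the paper, whose uniform construction degenerates at $m=1$. But the proposal stops exactly where the lemma's content begins: you explicitly defer ``the technical heart'' and conclude only ``granting the construction.'' No gadget is exhibited and no crossing is verified, so what you have is a plan whose entire burden is the step you skipped.

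Moreover, the gadget you sketch---one long run of the smallest digit followed by a short pattern cycling once through the $m$ levels---does not work. Take $d=3$ and alphabet $\{0,1,2\}$, with trivial prefix: the word $0^L12$ yields a cycle with exactly one crossing (degree $1$), and the descending variant $0^L21$ yields exactly two, because at the boundary between the leading-$1$ and leading-$2$ blocks the unique leading-$1$ point $(0.\overline{10^L2})_3$ maps to the minimum of the cycle, so the crossing inequality $0 < dc_{i+1} \text{ (mod 1)} < dc_i \text{ (mod 1)}$ is impossible there. A single sweep cannot make every boundary wrap: after each boundary the image of the smallest element of the next block must land near $0$, which forces a long run of the smallest digit \emph{after every non-minimal digit block}, not one run at the start. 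This is exactly the paper's construction: it appends $\langle b_m\rangle\langle b_1\rangle\langle b_{m-1}\rangle\langle b_1\rangle\cdots\langle b_2\rangle\langle b_1\rangle b_m$, where $\langle b_t\rangle$ means $b_t$ repeated $N$ times and $N$ exceeds the multiplicity of every digit in the prefix. That choice pins down the extremal elements of each leading-digit class uniformly in the prefix: the smallest element beginning with $b_{t+1}$ starts $b_{t+1}\langle b_1\rangle$ and maps near $0$, while the largest element beginning with $b_t$ has tail starting with a digit at least $b_t$ (block interiors give $b_tb_t\cdots$, and the terminal $b_1b_m$ handles $t=1$), so all $m$ boundaries, including the wrap-around one, are crossings. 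Supplying such a verified gadget is precisely what your argument is missing, and your candidate would have to be replaced, not merely checked.
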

\begin{proof}
Let $\al \in S^1.$ Let $ \al = {(0.\al_1 \al_2 \al_3 .....)}_d$ such that $\forall \ r, \al_r \in \{b_1, b_2, ... ,b_m\} \subset \{0, 1, ..., (d - 1)\}.$ Here, $b_1, b_2, ..., b_m$ are fixed digits in base-$d$ such that $b_1 < b_2< ... < b_m.$

To prove that $\al$ lies in  $E_{m, d}$, we will show that for all $q \in \N, $ there exists a degree-$m$ cycle $C$ for  the $d$-map that intersects $d^{-q}$ neighborhood of $\al$. Any periodic point whose base-$d$ expansion contains exactly $m$ distinct digits is in a cycle of degree at most $m$. To get the maximum possible degree, we need maximum possible crossings. This can be achieved with the following construction:

Let $N \in \N$ such that for all $1 \le t \le m,$ $N$ is greater than the number of times $b_t$ appears in the first $q$ digits of $\al$. Let
$\langle b_t \rangle$ denote $  b_tb_tb_t...b_t \ \ (N \text{\ times} )$. Consider the following point:
\begin{equation}
c = {( 0 . \overline{\al_1\al_2\al_3...\al_q \langle b_m \rangle\langle b_1 \rangle\langle b_{m - 1} \rangle\langle b_1 \rangle....\langle b_2 \rangle\langle b_1 \rangle b_m})}_d
\end{equation}
Clearly, $c$ is in $d^{-q}$ neighborhood of $\al$. It is a point of cycle $C = \{c_1, c_2, ...c_{q + 2N(m - 1) + 1}\} $ for the $d$-map with 
 $0 < c_1 < c_2 < ... <  c_{q + 2N(m - 1) + 1}$. 

Now we need to prove that $deg(C) = m$.
For each $1 \le t \le m$, let $i_t$ be such that the largest element of $C$ whose base-$d$ expansion starts with the digit $b_t$ is $c_{i_t}$. 
Note that $c_{i_1}$ is at least ${(0 .b_1 b_m )}_d$. For $t > 1$, $\ c_{i_t}$ is at least ${(0 .\langle b_t \rangle )}_d$. 
For each $1 \le t < m, \  c_{i_t + 1}$ is the smallest element of $C$ whose base-$d$ expansion starts with the digit $b_{t + 1}$.
Note that the base-$d$ expansion of $ c_{i_t + 1}$ starts with $ 0.b_t \langle b_1 \rangle  $.

So, for all $1 \le t < m$,
$$ 0 < dc_{i_t + 1} \text{(mod 1)} < dc_{i_t} \text{(mod 1)} < 1,$$ 
i.e., $(c_{i_t}, c_{i_t + 1})$ is a crossing.

Note that $c_{q + 2N(m - 1) + 1} = c_{i_m}$ and $c_{i_m + 1} = c_1 = c_{i_1}$. So, 
$$ 0 < dc_{i_m + 1} \text{(mod 1)} < {(0. b_2)}_d < dc_{i_m} \text{(mod 1)} < 1,$$ 
i.e., $(c_{i_m}, c_{i_m + 1})$ is a crossing.

Thus, the crossing number of $C$ is at least $m$.
 $$\eta(C) \ge m$$
From Lemma \ref{cross, dig}, we know that the crossing number of $C$ is at most the number of distinct digits in the base-$d$ expansion of a point $C$, which in this case, is $m$. $$\eta(C) \le dig(C) = m$$

Therefore, $\deg(C) = \eta(C) = m$.\\
\end{proof}

The following result follows immediately.
\begin{prop} 
$E_{d, d} = S^1$. Thus, $dim_H(E_{d, d}) = 1 =  \frac{\log d}{\log d}.$
\end{prop}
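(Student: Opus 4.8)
The plan is to derive the proposition as a direct consequence of Lemma \ref{condition E_m,d} specialized to the case $m = d$. Lemma \ref{condition E_m,d} asserts that any point of $S^1$ whose base-$d$ expansion uses at most $m$ distinct digits lies in $E_{m,d}$. Setting $m = d$, the hypothesis ``at most $d$ distinct digits'' is satisfied by \emph{every} point of $S^1$, since the digit alphabet $\{0, 1, \ldots, d-1\}$ has exactly $d$ symbols and any base-$d$ expansion trivially uses digits from this set. Thus the lemma immediately yields $S^1 \subseteq E_{d,d}$.

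For the reverse inclusion, I would note that $E_{d,d}$ is by definition a subset of $S^1$ (it is the closure of a union of subsets of $S^1$, and $S^1$ is already closed), so $E_{d,d} \subseteq S^1$ holds trivially. Combining the two inclusions gives $E_{d,d} = S^1$. The only subtlety to keep in mind is the status of points with two base-$d$ expansions (the $d$-adic rationals) and the point $0$; but these are immaterial, since each such point still admits \emph{some} expansion using digits from the allowed alphabet, which is all the lemma requires.

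Having established $E_{d,d} = S^1$, the dimension claim follows from the standard fact that $\dim_H(S^1) = 1$, together with the elementary arithmetic identity $1 = \frac{\log d}{\log d}$, which matches the general formula $\frac{\log m}{\log d}$ of the main theorem at $m = d$. I would write this final line as a short computation rather than invoking any machinery.

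There is essentially no obstacle here: the proposition is a corollary of the preceding lemma, and the phrase ``follows immediately'' in the excerpt signals exactly this. The only thing worth spelling out for the reader is why the hypothesis of Lemma \ref{condition E_m,d} becomes vacuous when $m = d$, so I would make that one-sentence observation explicit and then close with the two-inclusion argument and the dimension computation.
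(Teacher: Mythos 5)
Your proposal is correct and matches the paper's proof exactly: both derive $S^1 \subseteq E_{d,d}$ by noting that the hypothesis of Lemma \ref{condition E_m,d} is vacuous when $m = d$, combine with the trivial reverse inclusion, and read off the dimension. Your extra remarks on $d$-adic rationals and the explicit dimension computation are fine but not needed beyond what the paper states.
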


\begin{proof}
Note that since base-$d$ has only $d$ digits, the set of all points of $S^1$ whose base-$d$ expansion contains at most $d$ distinct digits is $S^1$ itself. So, we have $S^1 \subset E_{d, d} \subset S^1$ or $E_{d, d} = S^1.$\\

\end{proof}

Now, we use Lemma \ref{condition E_m,d} to construct a subset of $E_{m, d}$.  Let $m$ and $d$ be positive integers with $d > 1$ and $1 \le m \le d$, and let $A_{m, d}$ be the set of points in $S^1$ whose base-$d$ expansion contains the digits from 0 to $(m-1)$ only. Clearly, $A_{m, d} \subset E_{m, d}$.

The structure of $A_{m, d}$ is similar to the structure of Cantor's set. Here, we start with $X_0 = [0, 1]$. Write $X_0$ in $d$ closed intervals of length $1/d$. Let $X_1$ be union of first $m$ of these intervals. 
$$X_1 = \bigcup _{i=0}^{m-1} {[i/d, {(i+1)}/d]} $$
$X_1$ is the union of $m$ intervals of length $1/d$. Divide each such interval in $d$ equal parts and take the first $m$ in $X_2.\ X_2$ is the union of $m^2$ intervals of length $1/{d^2}$.

Repeat the process. If $X_i$ is the union of $m^i$ intervals of length $d^{- i}$, then divide each such interval in  $d$ equal parts and take the first $m$ in $X_{i+1}$.
$$A_{m, d} = \bigcap _{i=0}^{\infty} {X_i}$$

Now we calculate the Hausdorff Dimension of $A_{m, d}$.

\newpage

\begin{lem}
$dim_H(A_{m, d}) = \frac {\log m}{\log d}$.

\end{lem}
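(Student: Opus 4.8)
The plan is to prove the equality by establishing the two inequalities separately. Throughout I write $s = \frac{\log m}{\log d}$ and use the defining relation $d^s = m$. The essential input is the explicit combinatorial description of the set: its level-$i$ approximation $X_i$ is a disjoint union of exactly $m^i$ closed intervals of length $d^{-i}$, and this structure drives both bounds.

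For the upper bound $dim_H(A_{m, d}) \le s$, I would use the sets $X_i$ themselves as covers. Covering $A_{m,d}$ by the $m^i$ intervals of $X_i$, each of radius $r_j = d^{-i}$, the associated sum for an exponent $\delta$ is
$$\sum_j r_j^{\delta} = m^i \left(d^{-i}\right)^{\delta} = \left(m\, d^{-\delta}\right)^i.$$
Whenever $\delta > s$ we have $d^{\delta} > d^s = m$, so the base $m\, d^{-\delta} < 1$ and the sum tends to $0$ as $i \to \infty$. Hence the covering infimum is $0$ for every $\delta > s$, and by the definition of Hausdorff dimension this gives $dim_H(A_{m, d}) \le s$.

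For the lower bound I would apply the mass distribution principle. Define the natural self-similar probability measure $\mu$ on $A_{m,d}$ by assigning mass $m^{-i}$ to each of the $m^i$ level-$i$ intervals; equivalently, $\mu$ is the law of the base-$d$ expansion whose digits are i.i.d.\ uniform on $\{0, 1, \dots, m-1\}$. This is consistent since each level-$i$ interval contains exactly $m$ level-$(i+1)$ intervals. The key estimate is a uniform bound $\mu(B(x, r)) \le C r^s$: given $r$ with $d^{-(i+1)} \le r < d^{-i}$, the ball $B(x, r)$ has diameter less than $2 d^{-i}$ and so meets only boundedly many (independent of $i$) of the level-$i$ intervals, each carrying mass $m^{-i}$, whence $\mu(B(x, r)) \le C m^{-i}$; on the other hand $r \ge d^{-(i+1)}$ together with $d^s = m$ gives $r^s \ge d^{-s} m^{-i}$, so that $\mu(B(x, r)) \le C d^s r^s$. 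The mass distribution principle then forces $\mathcal{H}^s(A_{m, d}) > 0$, and therefore $dim_H(A_{m, d}) \ge s$. Combining the two inequalities yields the claim.

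The main obstacle is the lower bound, and within it the uniform ball-counting step: one must check carefully that a ball of radius comparable to $d^{-i}$ can intersect only a bounded number of the level-$i$ intervals (the kept intervals within a common parent are adjacent, so the count is a small universal constant), which is exactly what makes the captured mass $O(m^{-i})$ uniformly in $x$. The upper bound, by contrast, reduces to the one-line geometric-series computation above once one observes $d^{\delta} > m$ for $\delta > s$.
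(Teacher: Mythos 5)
Your proof is correct, and while your upper bound is exactly the paper's (cover $A_{m,d}$ by the $m^i$ level-$i$ intervals of $X_i$ and let $i \to \infty$), your lower bound takes a genuinely different route. The paper argues directly on coverings: using compactness it reduces to a finite cover $V_1, \dots, V_p$, sorts the $V_r$ by size into classes $d^{-i} \le |V_r| < d^{-i+1}$, observes that such a $V_r$ meets at most two intervals of $X_{i-1}$ and hence contains at most $2m^{b-i+1}$ of the $m^b$ intervals of $X_b$, and concludes $\sum_r |V_r|^s \ge \frac{1}{2m}$ for $s = \frac{\log m}{\log d}$. You instead invoke the mass distribution principle with the Bernoulli measure $\mu$ giving each level-$i$ interval mass $m^{-i}$ (i.e.\ i.i.d.\ digits uniform on $\{0,\dots,m-1\}$), and verify $\mu(B(x,r)) \le C r^s$ via the same geometric fact in ball form: a ball of diameter less than $2d^{-i}$ meets boundedly many level-$i$ intervals (at most $3$, since the level-$i$ intervals sit in a grid of spacing $d^{-i}$ --- you do not even need the adjacency-within-a-parent remark), and $r \ge d^{-(i+1)}$ with $d^s = m$ converts $m^{-i}$ into $O(r^s)$. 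These are morally the same estimate --- the paper's counting amounts to distributing unit mass over the $m^b$ level-$b$ intervals and bounding the mass each $V_r$ can capture --- but your version outsources the cover-bookkeeping to a standard lemma, making the argument shorter and immediately reusable for other self-similar sets, at the cost of quoting the mass distribution principle; the paper's version is longer but entirely self-contained, using nothing beyond the definition of Hausdorff dimension and compactness. Your consistency check for $\mu$ (each level-$i$ interval contains exactly $m$ level-$(i+1)$ intervals) is the right thing to note, and the shared endpoints of adjacent kept intervals cause no trouble since you only count intervals intersected.
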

\begin{proof}
First, we prove that the Hausdorff dimension of  $A_{m, d}$  is at most $\frac {\log m}{\log d}$. Let $\be > 0$. Note that for all $i,  A_{m, d} \subset X_i$. So, for each $i$, we have $m^i$ intervals of length $d^{-i}$ that form a covering of $ A_{m, d}$.
Letting $\be = \frac {\log m}{\log d} + \eps$ for some $\eps > 0$, 
$$ \lim_{i \to \infty} {m^i {(d^{-i})}^\be} = \lim_{i \to \infty} {m^i (d^{-i \eps})( (d^{\frac {\log m}{\log d}})^{-i}) } = \lim_{i \to \infty}{d^{-i \eps}} =  0  $$
This means that, for any $\be > \frac {\log m}{\log d}$, we can cover $A_{m, d}$ such that the summation of $\be$ powers of the lengths of the intervals in the cover is as small as we like. So, 
\[dim_H(A_{m, d}) \le \frac{\log m}{\log d}.\]

Now we need to show that the Hausdorff dimension of  $A_{m, d}$  is at least $\frac {\log m}{\log d}$. Note that we can consider  $A_{m, d}$ as a subset of $[0, m/d],$ a compact set. So, for any countable cover $(U_r)$ of $A_{m, d}$, we can find finitely many open sets $V_1, V_2, ..., V_p$ such that 
$$\bigcup _{r = 0}^{\infty} {U_r} \subset \bigcup _{r = 0}^p {V_r} \text{\quad and}$$
$$\sum _{r = 0}^p {|V_r|^\be} \le \sum _{r = 0}^{\infty} {|U_r|^\be}.$$

We next get a lower bound on $\sum _{r = 0}^p {|V_r|^\be}$. Let $b \in \N $ such that for all $r, d^{ - b} \le |V_r|$.  Additionally, for all $1 \le i \le b$, let $N_i$ be the number of $V_r$'s which satisfy $d^{-i} \le |V_r| < d^{- i +1}$. Observe that if $|V_r| < d^{- i + 1}$, then $V_r$ can intersect at most two intervals in $X_{i - 1}$. Hence,  $V_r$ can contain at most $2m^{b - i + 1}$ intervals in $X_b$, which has $m^b$ intervals. So,
$$\sum _{i = 0}^b {2m^{b - i + 1}N_i} \ge m^b $$
or
$$\sum  _{i = 0}^b {m^{-i} N_i} \ge \frac{1}{2m}$$
For  $\be = \frac {\log m}{\log d}$,
\[ \sum _{r = 0}^p {|V_r|^\be} \ge \sum _{i = 0}^b {{(d^{-i})}^\be N_i} = \sum  _{i = 0}^b {m^{-i} N_i} \ge \frac{1}{2m}.\]
In other words, the summation of $\frac{\log m}{\log d}$ powers of the lengths of the intervals which form a cover of $A_{m, d}$ has a positive lower bound. Thus, 
\[dim_H(A_{m, d}) \ge \frac {\log m}{\log d}.\]
\end{proof}

Since $A_{m, d} \subset E_{m, d},$ we immediately get a lower bound on the Hausdorff Dimension of $E_{m, d}:$

\begin{thm}
 Let $m, d \in \N$ with  $1 < d$ and $1 \le m \le d$. Then,
$$dim_H(E_{m, d}) \ge \frac{\log m}{\log d}$$
\qed
\end{thm}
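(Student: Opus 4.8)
The plan is to deduce the lower bound directly from the two facts already established: the inclusion $A_{m, d} \subset E_{m, d}$ and the computation $dim_H(A_{m, d}) = \frac{\log m}{\log d}$. The only additional ingredient is the monotonicity of Hausdorff dimension under set inclusion, which I would invoke as the single substantive step.

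First I would recall why $A_{m, d} \subset E_{m, d}$. By construction, every point of $A_{m, d}$ has a base-$d$ expansion using only the digits $0, 1, \ldots, m - 1$, hence at most $m$ distinct digits. Lemma \ref{condition E_m,d} then places each such point in $E_{m, d}$, giving the inclusion.

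Next I would verify the monotonicity of Hausdorff dimension: if $A \subseteq B$, then any cover $\{ B(x_i, r_i) \}$ of $B$ is simultaneously a cover of $A$, so the infimum defining $dim_H(A)$ ranges over a collection of coverings that contains all those available for $B$; consequently $dim_H(A) \le dim_H(B)$. Applying this with $A = A_{m, d}$ and $B = E_{m, d}$, together with the value $dim_H(A_{m, d}) = \frac{\log m}{\log d}$ from the preceding lemma, yields $dim_H(E_{m, d}) \ge \frac{\log m}{\log d}$.

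The main obstacle, such as it is, resides entirely in the already-proven lemma computing $dim_H(A_{m, d})$ — in particular the lower-bound half, where one argues via the counting estimate (how many level-$b$ intervals a small set can meet) that no efficient cover of the Cantor-like set $A_{m, d}$ can beat the exponent $\frac{\log m}{\log d}$. Granting that lemma and the inclusion, the present theorem is immediate.
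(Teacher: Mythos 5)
Your proposal is correct and matches the paper's own argument exactly: the theorem is stated with an immediate \qed precisely because it follows from the inclusion $A_{m,d} \subset E_{m,d}$ (via Lemma \ref{condition E_m,d}), the computation $dim_H(A_{m,d}) = \frac{\log m}{\log d}$, and monotonicity of Hausdorff dimension under inclusion. Your explicit verification of monotonicity is a small addition the paper leaves implicit, but the route is the same.
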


\section{Upper Bound}

In this section, we first find an upper bound on the number of degree-$m$ cycles for $d$-map which have $n$ elements. We extend this result to precycles. Then, we find an appropriate covering of $E_{m, d}$ to prove that its Hausdorff Dimension is at most $\frac {\log m}{\log d}$.

\begin{defn}[Partition generated by a cycle]\label{def:partition}
Let $C =\{ c_1, c_2,..., c_n\}$ be a degree-$m$ cycle for $d$-map such that  $0 < c_1 < c_2 <.... < c_n < 1$. 
Let $\si$ be the map on $\{1, 2, ...., n\}$ which satisfies:
$$dc_r \text{(mod 1)} = c_{\si(r)} ,\forall \  1 \le r \le n$$  

Note that $\si$ is a permutation of $\{1, 2, ...., n\}$ because $C$ is a cycle.

Let $1 \le i_1 < i_2 < ... < i_m \le n $ be such that for all $1 \le t \le m, (c_{i_t}, c_{i_t + 1}) $ is a crossing generated by $C$. From the ordering of the elements of $C$ and the definition of crossing, we conclude that:
$$\si(i_t) > \si(i_t + 1) \text{ and } \si(i_t + 1) < \si(i_t + 2) <... < \si(i_{t + 1}), \ \forall \ 1 \le t < m$$ and
$$\si(i_m) > \si(i_m + 1) \text{ and } \si(i_m + 1) < ... < \si(n)  < \si(1) < ... < \si(i_1)$$

Now we construct a partition of $\{1, 2, .., n \}$ using the above property of $\si$.

\[ P_t = \left\{ 
  \begin{array}{l l}
    \{ \si(r) | i_t < r \le i_{t + 1} \} & \quad \text{if $1 \le t < m$}\\
    \{ \si(r) | i_m < r \le n \} \cup  \{ \si(r) | 1\le r \le i_1 \} & \quad \text{if $t = m$}
  \end{array} \right.\]

$ \{P_t | 1 \le t \le m \} $  is a partition of $\{1, 2, .., n \}$, called as the \textbf{partition generated by $C$} and it is denoted by $P(C)$.
\end{defn}
\begin{remark}
  Both $P(C)$ and the map $\si$ are useful counting $n$-element degree-$m$ cycles, as we show below.

\end{remark}
\begin{exmp}

Let us consider the instance where $d = 3$ and 
$$C = \{ (0. \overline{00102})_3, (0. \overline{01020})_3, (0. \overline{02001})_3, (0. \overline{10200})_3,  (0. \overline{20010})_3\}.$$ Note that here, $n = 5$. Under the 3-map,
\begin{align*}
c_1 &= (0. \overline{00102})_3 \mapsto (0. \overline{01020})_3 = c_2\\
c_2 &= (0. \overline{01020})_3 \mapsto (0. \overline{10200})_3 = c_4\\
c_3 &= (0. \overline{02001})_3 \mapsto (0. \overline{20010})_3 = c_5\\
c_4 &= (0. \overline{10200})_3 \mapsto (0. \overline{02001})_3 = c_3\\
c_5 &= (0. \overline{20010})_3 \mapsto (0. \overline{00102})_3 = c_1 
\end{align*}

So, $\si$ is the permutation of $\{1, 2, 3, 4, 5\}$ which takes 1, 2, 3, 4, 5 to 2, 4, 5, 3, 1 respectively. $(c_3, c_4)$ and  $(c_4, c_5)$ are the crossings generated by the cycle. So, $i_1 = 3$ and $i_2 = 4$, and the partition generated by $C$ is given by:
$$P(C) = \{P_1, P_2 \} \text{ where } P_1 =  \{\si(r)| 3 < r \le 4 \} , P_2 =  \{ \si(r)| 4 < r \le 5 \} \cup \{ \si(r)| 1 \le r \le 3\}$$
or
$$P(C) = \{ \{3\}, \{1, 2, 4, 5\}\}$$\\
\end{exmp}

Now, we will show that if some properties of a cycle such as degree, the partition it generates and its digit portrait are given, then we can construct the cycle (find its points). Later, we will use this result to count the number of cycles of fixed degree and cardinality.

\begin{lem}\label{cycle construct}
Given positive integers $d, m, n$,  $P = \{P_1, P_2,..., P_{m}\}$ which is a partition of $\{1, 2, ..., n\}$, positive integer $i_1 < |P_m| $ and a non-decreasing map  $F: \{ 0, 1, 2, ..., (d - 1) \} \rightarrow \{0, 1, 2, ..., n\} $ with $F(d - 1) = n$, there exists at most one cycle $C$ for $d$-map such that:
\begin{enumerate}
\item $C =\{ c_1, c_2,..., c_n\}$ with $0 < c_1 < c_2 <.... < c_n < 1$ 
\item $deg(C) = m$ 
\item $P(C) = P$ or $P$ is the partition generated by $C$
\item If, for all $1 < t \le m$, $i_t = i_{t - 1} + |P_t|$, then $(c_i, c_{ i + 1 })$ is a crossing for all $i \in \{i_1, i_2, ..., i_m\}$. 
\item $F$ is the Digit Portrait of $C$.
\end{enumerate}
\end{lem}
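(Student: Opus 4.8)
The plan is to prove \emph{uniqueness} by reconstructing, from the given data, the full base-$d$ expansion of every point of $C$ and showing that every digit is forced. Concretely, I would take an arbitrary cycle $C = \{c_1 < c_2 < \cdots < c_n\}$ satisfying (1)--(5) and argue that the values $c_1, \ldots, c_n$ are completely determined by $P$, $i_1$, and $F$; since nothing in the reconstruction depends on a \emph{choice}, two cycles with the same data must coincide, so at most one can exist.

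First I would recover the permutation $\sigma$ defined by $dc_r \pmod 1 = c_{\sigma(r)}$. From $i_1$ together with the block sizes $|P_t|$ one obtains all crossing positions $i_1 < i_2 < \cdots < i_m$, and these partition $\{1, \ldots, n\}$ into the blocks of consecutive positions lying strictly between successive crossings, the last block wrapping around through $0$. By Definition \ref{def:partition}, no crossing occurs inside a block, so $\sigma$ is strictly increasing along each block; thus on the block of positions associated with $P_t$ the map $\sigma$ sends those positions, taken in increasing (cyclic) order, to the elements of $P_t$ taken in increasing order. This is the unique order-preserving bijection between the two sets, so $\sigma$ is completely determined by $P$ and $i_1$. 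Moreover, since $C$ is a cycle, $\sigma$ is a single $n$-cycle.

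Next I would recover the leading digits and then the whole expansions. Because $F$ is the Digit Portrait of $C$ and the $c_r$ are increasing, the point $c_r$ has leading base-$d$ digit equal to the unique $j$ with $F(j-1) < r \le F(j)$ (setting $F(-1)=0$); call this digit $g(r)$, so $F$ determines $g \colon \{1,\ldots,n\} \to \{0,\ldots,d-1\}$. Now writing $c_r = (0.\overline{a_1 a_2 \cdots})_d$, the relation $dc_r \pmod 1 = c_{\sigma(r)}$ says that shifting the expansion of $c_r$ yields that of $c_{\sigma(r)}$, whence the $(k+1)$-st digit of $c_r$ is the leading digit $g(\sigma^k(r))$ of $c_{\sigma^k(r)}$ for every $k \ge 0$. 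Since $\sigma$ is an $n$-cycle, the indices $\sigma^0(r), \sigma^1(r), \ldots, \sigma^{n-1}(r)$ exhaust $\{1,\ldots,n\}$, so $c_r = (0.\overline{g(r)\, g(\sigma(r)) \cdots g(\sigma^{n-1}(r))})_d$ is fully determined. As this holds for all $r$, the cycle $C$ is unique.

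The main obstacle is the first step: carefully checking that the combinatorial bookkeeping of the crossing positions read off from $i_1$ and the $|P_t|$, the cyclic wrap-around of the final block, and the block-wise monotonicity of $\sigma$ together pin $\sigma$ down with no ambiguity. Everything afterward is forced by the shift dynamics of the cycle, and since the lemma asserts only that at most one such cycle exists, no consistency or existence check on the reconstructed digits is required.
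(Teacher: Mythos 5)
Your proposal is correct and takes essentially the same route as the paper's proof: both recover $\sigma$ as the unique blockwise order-preserving map determined by $P$ and $i_1$, read the leading digit $b(r)$ of $c_r$ off the Digit Portrait $F$, and then use the shift relation $dc_r \ (\text{mod } 1) = c_{\sigma(r)}$ to force every digit, yielding $c_r = {(0.\overline{b(r)\, b(\sigma(r)) \cdots b(\sigma^{n-1}(r))})}_d$. Incidentally, your exponent $\sigma^{n-1}$ is the right one; the paper's displayed formula writes $\sigma^{d-1}$, which is a typo.
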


\begin{proof}
Suppose $C$ is a cycle for $d$-map which satisfies all the conditions above, and let $\si$ be the map on $ \{1, 2, ...., n\}$ given by:
\[\si(r) = \left\{ 
  \begin{array}{l l}
     {(r - i_t)}^{\text{th}}\text{ element of }P_{t}& \quad \text{if $i_t < r \le i_{t + 1}$ and $1 \le t < m$}\\
     {(r - i_m)}^{\text{th}}\text{ element of }P_{m}& \quad \text{if $i_m < r$}\\
     {(r + n - i_m)}^{\text{th}}\text{ element of }P_{m}& \quad \text{if $r < i_1$}\\
  \end{array} \right.\]
where, for all $1 \le t \le m$, the elements of $P_t$ are in increasing order. Note that $\si$ is uniquely determined by $m, n, P$ and $i_1$. Comparing this with the definition of partition generated by a cycle, we conclude that 
$$dc_r \text{(mod 1)} = c_{\si(r)} ,\forall \  1 \le r \le n$$ 
Let $b :  \{1, 2, ...., n\} \rightarrow \{0,1, 2, ...., (d - 1)\}$ given by:
\[b(r) = \left\{ 
  \begin{array}{l l}
     0 & \quad \text{if $r < F(0)$}\\
     j & \quad \text{if $F(j - 1) < r \le F(j)$ and $1 \le j \le (d - 1)$}
  \end{array} \right.\]\\
Note that $b$ is uniquely determined by $n, d$ and $k$. Comparing this with the definition of Digit Portrait, we conclude that
$$c_r \in [\ \frac{b(r)}{d}, \frac{b(r) + 1}{d}\ ), \ \forall \ 1 \le r \le n$$
or
$$b(r) = \text{the first digit in the base-}d\text{ expansion of }c_r, \text{for all} \ 1 \le r \le n$$\\
Also,
$$b(\si(r)) = \text{the first digit in the base-}d\text{ expansion of }c_{\si(r)} $$
$$\quad \quad = \text{ the second digit in the base-}d\text{ expansion of }c_r $$\\
Thus, we conclude that
$$c_r = {(0 .\overline{b(r)b(\si(r))b(\si^2(r)).....b(\si^{d- 1}(r))}\ )}_d$$ 
In other words, the cycle $C$ is uniquely determined by $\si$ and $b$. \\
\end{proof}

\begin{remark}
$(c_i, c_{i + 1})$ can be a crossing only if the first digits of the base-$d$ expansions of $c_i$ and $c_{i + 1}$ differ. So, cycle $C$ satisfying all conditions in the above lemma can exist only if 
$$\{ F(0), F(1), ...,  F(d - 1)\} \subset \{ i_1, i_2, ...i_m\} \cup \{n\}$$
We will use this in the proof of the following lemma.
\end{remark}

\newpage

\begin{lem}\label{count cycles}
Let $d, m, n \in \N$ with $1 \le m \le d$ and $n, d > 1$. Then the number of cycles $C$ for $d$-map satisfying $|C| = n $ and $deg(C) = m$ is at most $O(n^{d-m+1} m^{n-1}).$
\end{lem}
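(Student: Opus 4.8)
The plan is to apply Lemma \ref{cycle construct}, which says that a degree-$m$ cycle $C$ with $|C| = n$ is recovered from at most one admissible triple: the partition $P = P(C)$ of $\{1,\dots,n\}$ into $m$ blocks, the index $i_1$, and the Digit Portrait $F$. Hence the number of such cycles is bounded by the number of admissible triples $(P,i_1,F)$, and I would bound the three factors separately, aiming for $m^{n-1}$ from $P$, a factor of $n$ from $i_1$, and $O(n^{d-m})$ from $F$, whose product is the claimed $O(n^{d-m+1}m^{n-1})$.

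For the partition factor I would encode an ordered partition $P=\{P_1,\dots,P_m\}$ into nonempty blocks by the block-index map $h\colon\{1,\dots,n\}\to\{1,\dots,m\}$ sending each element to the block containing it; the fibers of $h$ recover $P$, so there are at most $m^{n}$ choices. The refinement to $m^{n-1}$ should come from the cyclic nature of the crossing structure recorded in Definition \ref{def:partition}: because the largest element $n$ necessarily ends its increasing run, the run (hence block) containing it is pinned down once we impose the normalization $i_1<|P_m|$, which removes one factor of $m$. Making this reduction precise, and checking that distinct admissible $(P,i_1)$ do not encode the same permutation $\sigma$ under two different choices of a ``first'' crossing, is the first place that needs care.

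The index $i_1$ ranges over $\{1,\dots,|P_m|\}$, contributing at most $n$ choices, and together with the block sizes it fixes all crossing positions $i_1<i_2<\cdots<i_m$. For the Digit Portrait factor I would use the mechanism behind Lemma \ref{cross, dig}: a pair $(c_i,c_{i+1})$ with $i<n$ can be a crossing only if the first base-$d$ digits of $c_i$ and $c_{i+1}$ differ, i.e.\ only if $i$ is a jump value of $F$. Thus every crossing lying strictly inside $\{1,\dots,n-1\}$ must coincide with a jump of $F$, so at least $m-1$ of the at most $d-1$ interior jump values of $F$ are already pinned to the $i_t$'s; the single crossing coming from the wrap-around pair $(c_n,c_1)$ imposes no digit constraint, which is why $d-m$ (rather than fewer) interior jumps of $F$ remain free. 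Each free jump may sit at any of $O(n)$ positions, while assigning actual digits to the resulting blocks costs only an $O(1)$ factor, so $\sum_{j=0}^{d-m}\binom{n-1}{j}=O(n^{d-m})$ bounds the number of admissible $F$ for each fixed $(P,i_1)$.

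Multiplying the three bounds gives $m^{n-1}\cdot n\cdot O(n^{d-m})=O(n^{d-m+1}m^{n-1})$. I expect the main obstacle to be the partition factor, namely rigorously justifying the passage from $m^{n}$ to $m^{n-1}$ by pinning exactly one block through the cyclic normalization and ruling out the attendant double-counting of $\sigma$. A secondary subtlety is the direction of the crossing–digit correspondence: I would verify that one only needs each crossing to sit at a digit jump, not the converse, since interior digit jumps that are not crossings are harmless and are exactly what the free $O(n^{d-m})$ choices of $F$ account for.
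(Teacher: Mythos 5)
Your proposal follows the paper's proof along the same skeleton: invoke Lemma \ref{cycle construct} to bound cycles by admissible triples $(P,i_1,F)$, then bound the partition/$i_1$ data by $n\,m^{n-1}$ and the Digit Portraits by $O(n^{d-m})$. Your $F$-count is essentially the paper's, and your care about the direction of the crossing--digit correspondence is well placed: what is true and what the count needs is that each interior crossing index $i_t<n$ is a value of $F$ (so $\{i_1,\dots,i_m\}\subset\{F(0),\dots,F(d-1)\}$, pinning $m$ or $m-1$ of the $d$ values and leaving at most $d-m$ free), which is exactly your formulation; the paper's remark literally states the reverse inclusion $\{F(0),\dots,F(d-1)\}\subset\{i_1,\dots,i_m\}\cup\{n\}$, which is false (a digit jump need not be a crossing --- e.g.\ the degree-1 cycle $\{(0.\overline{0012})_4,\dots\}$ of the paper's Example 3 has a digit jump at position $2$ with no crossing there), but the arithmetic $(n+1)^{d-m}$ it performs is the one your corrected inclusion justifies. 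Also, your worry about distinct pairs $(P,i_1)$ encoding the same $\sigma$ is moot: for an upper bound one only needs the map from cycles to triples to be injective, which Lemma \ref{cycle construct} supplies; over-counting tuples is harmless.

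The genuine gap is your route to the factor $m^{n-1}$. The claim that the normalization $i_1\le|P_m|$ pins down the block containing $n$ is false: $n$ is always the last element of its increasing run, but \emph{which} block that run yields varies from cycle to cycle. In the paper's Example 4 one has $n=5\in P_2=P_m$, whereas the degree-2 cycle for the 3-map $C=\{(0.\overline{0121})_3,(0.\overline{1012})_3,(0.\overline{1210})_3,(0.\overline{2101})_3\}$ has $\sigma(1)=3,\ \sigma(2)=1,\ \sigma(3)=4,\ \sigma(4)=2$, crossings at $i_1=1$ and $i_2=3$, hence $P_1=\{1,4\}\ni n=4$ and $P_2=\{2,3\}$, with the normalization satisfied. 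Indeed the number of ordered partitions into $m$ nonempty blocks is $m!\,S(n,m)\sim m^n$, so no bound of $m^{n-1}$ for the partition factor alone can hold. This error happens to be harmless for the stated lemma, since $m\le d$ is fixed and $m^n\cdot n\cdot O(n^{d-m})=O(n^{d-m+1}m^{n-1})$ anyway; but if you want the cleaner constant the paper obtains, the fix is to count $(P,i_1)$ \emph{jointly}: $i_1$ has only $|P_m|=a_m$ choices (not $n$), and
\begin{equation*}
\sum_{\substack{a_1+\cdots+a_m=n\\ a_i\ge 1}} a_m\,\frac{n!}{a_1!\cdots a_m!}
=\; n\sum_{\substack{a_1+\cdots+a_m=n\\ a_i\ge 1}}\frac{(n-1)!}{a_1!\cdots a_{m-1}!\,(a_m-1)!}
\;\le\; n\, m^{n-1}
\end{equation*}
by the multinomial theorem applied to compositions of $n-1$ into $m$ nonnegative parts. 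This joint count, rather than any pinning of the block containing $n$, is where the paper's $n\,m^{n-1}$ comes from.
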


\begin{proof}
$d, m, n$ are given. Now we need  $P = \{P_1, P_2,..., P_{m}\}$ which is a partition of $\{1, 2, ..., n\}$, positive integer $i_1 < |P_m| $ and a non-decreasing map  $F: \{ 0, 1, 2, ..., (d - 1) \} \rightarrow \{0, 1, 2, ..., n\} $ with $F(d - 1) = n$ to fix a degree-$m$, $n$-element cycle.

Let $T$ be the set of ordered $(m + 1)$-tuples $(P_1, P_2, ..., P_m, i_1)$  such that
$P = \{P_1, P_2,..., P_{m}\}$ is a partition of $\{1, 2, ..., n\}$,
$i_1 \in \N$ and $ i_1 \le |P_m|$.
We want an upper bound on the size of T, denoted here as $\#(T)$.\\
\begin{align*}
\#(T) &= \sum _{\substack {a_1 + a_2 + ... + a_m = n \\ a_i \in \N}} {a_m\ {n \choose {a_1}} {{n - a_1} \choose {a_2}} ... {{{n - a_1 - a_2 - ... - a_{m-1}} \choose {a_m}}}}\\
&= \sum _{\substack {a_1 + a_2 + ... + a_m = n \\ a_i \in \N}} {a_m \cdot \frac{n!}{a_1! \ a_2! \ .... a_m!}}\\
&= \sum _{\substack {a_1 + a_2 + ... + a_m = n \\ a_i \in \N}} {\frac{n!}{a_1! \ a_2! \ .... a_{m - 1}! \ (a_m - 1)!}}\\
&\le \sum _{\substack {a_1 + a_2 + ... + a_{m-1} + (a_m - 1) = n - 1 \\ a_i \in \N \cup \{0\}}}{n \cdot  \frac{(n - 1)!}{a_1! \ a_2! \ .... a_{m - 1}! \ (a_m - 1)!}}\\
&= n \cdot m^{n - 1}
\end{align*}

After fixing an element of $T$, we need to fix a non-decreasing map $F$ from $ \{ 0, 1, 2, ..., (d - 1) \}$ to $\{0, 1, 2, ..., n\}$ with $F(d - 1) = n$ and $\{ F(0), F(1), ...,  F(d - 1)\} \subset \{ i_1, i_2, ...i_m\} \cup \{n\}$. Observe that there are $(n + 1)^{(d - m - 1)}$ or $(n + 1)^{(d - m)}$ choices for $F$, depending on if $n$ is in $\{ i_1, i_2, ...i_m\}$ or not.

Using Lemma \ref{cycle construct}, we conclude that the number of degree-$m$ $n$-element cycles for $d$-map is at most  $O(n^{d-m+1} m^{n-1})$.

\end{proof}
To conclude this section (and to show our desired result), we first introduce precycles.
\begin{defn}[Precycle]
Let $d$ be a positive integer greater than 1. A finite set $C_P \subset S^1$ is called a \textbf{precycle} for $d$-map, if and only if $C_P = \{c \cdot d^i \text{ (mod 1)}| i \in \N \cup \{0\}\}$ for some $c \in S^1$. In other words, a precycle is the forward orbit of a rational point in $S^1$. It can be written in terms of base-$d$ expansion of its points as
$$C_P = \{ (0.b_rb_{r + 1}...b_{n_1}\overline{b_1'b_2'....b_{n_2}'} )_d | 1 \le r \le n_1\} \cup \{  (0. \overline{b_k'b_{k+1}'...b_{n_2}b_1'....b_{k-1}'})_d | 1 \le k \le n_2 \}, $$
where $n_1$ is a fixed non-negative integer, $n_2$ is a fixed positive integer and all $b_r, b_k'$ are fixed digits in $\{0, 1, 2, ..., d - 1\}$. 
\end{defn}
\begin{remark}
Every precycle $C_P$ includes a cycle. If $n_1 = 0$, then $C_P$ itself is a cycle.
\end{remark}
\begin{exmp}
  Consider $d = 2$ and $C_P = \{ \frac{1}{3}, \frac{2}{3} , \frac{5}{6} \}$. $C_P$ can be written as
  $$ C_P = \{ \frac{5}{6} \cdot d^i \text{(mod 1)} | i \in \N \cup \{0\}\}$$
  or
  $$C_P = \{\frac{5}{6}\} \cup \{\frac{1}{3}, \frac{2}{3} \} = \{(0.1\overline{10})_2\} \cup \{(0.\overline{01})_2, (0.\overline{10})_2 \}$$
\end{exmp}

\begin{remark}
We can define degree, crossing, crossing number, digit portrait and partition for a precycle simply by replacing $C$ by $C_P$ in the respective definitions. \\
In the case where $C_P$ is a precycle but not a cycle, the map $\si$ in the definition of partition (Definition \ref{def:partition}) is not a permutation. Under $\si$, one element of  $\{1, 2, ..., n\}$ has no preimages, one has two preimages and all other elements have exactly one preimage. 
\end{remark}

\begin{lem}\label{count precycles}
Let $d, m, n \in \N$ with $1 \le m \le d$. Then the number of precycles $C_P$ for the $d$-map satisfying $|C_P| = n $ and $deg(C_P) = m$ is at most $O(n^{d-m+3} m^{n-1})$.
\end{lem}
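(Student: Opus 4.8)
The plan is to mirror the cycle count carried out in Lemma \ref{cycle construct} and Lemma \ref{count cycles}, exploiting the observation recorded in the preceding remark that for a precycle $C_P = \{c_1, c_2, \dots, c_n\}$ (with $0 < c_1 < \dots < c_n < 1$) the map $\si$ defined by $dc_r\ \text{(mod 1)} = c_{\si(r)}$ fails to be a permutation in a tightly controlled way: exactly one index of $\{1, 2, \dots, n\}$ has no $\si$-preimage, exactly one has two preimages, and every other index has exactly one. The strategy is to show that this single failure of bijectivity can be recorded by two extra pieces of data, each amounting to at most $n$ choices, so that the precycle count is the cycle count of Lemma \ref{count cycles} multiplied by a factor of $O(n^2)$.

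First I would prove a precycle analogue of Lemma \ref{cycle construct}: a degree-$m$ precycle $C_P$ of size $n$ is uniquely determined by the same combinatorial data used for a cycle — a collection $P = \{P_1, \dots, P_m\}$ encoding the increasing runs of $\si$ between consecutive crossings, the integer $i_1$, and the Digit Portrait $F$ — together with two additional indices, namely the unique index $v_0$ that is absent from the image of $\si$ and the unique index $v_1$ that is hit twice. Between consecutive crossings $\si$ is still strictly increasing, so each run is recovered as the increasingly-ordered listing of its block exactly as before; the only modification is that the blocks now cover $\{1, \dots, n\} \setminus \{v_0\}$ with $v_1$ appearing in two distinct blocks rather than forming a genuine partition of $\{1, \dots, n\}$. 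Once $v_0$ and $v_1$ are specified this ambiguity is removed, $\si$ is pinned down as a function, and reading the map $b$ off from $F$ reconstructs each $c_r$ through its eventually-periodic base-$d$ expansion. Hence at most one precycle carries any prescribed tuple of data.

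This reconstruction step is where the main difficulty lies: one must set up the bookkeeping of the increasing runs carefully enough that both the non-injectivity and the non-surjectivity of $\si$ are fully absorbed by the two indices $v_0$ and $v_1$, and verify that no further information is needed to determine $\si$. Equivalently, after deleting one of the two preimages of $v_1$ the restricted map becomes a bijection between two $(n-1)$-element sets differing only in $v_0$ and that deleted index, so the same composition/multinomial manipulation that yields $\#(T) \le n\, m^{n-1}$ in Lemma \ref{count cycles} applies, at the cost of the recording of $v_0$ and $v_1$.

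With the reconstruction in hand the count is immediate. The number of choices for the partition-type data $(P_1, \dots, P_m, i_1)$ is bounded by $\#(T) \le n\, m^{n-1}$ as in the proof of Lemma \ref{count cycles}; the number of admissible non-decreasing maps $F$ with $F(d-1) = n$ is $O(n^{d-m})$; and the two additional indices $v_0, v_1$ contribute a further factor of at most $n^2$. Multiplying these estimates gives
$$ n \cdot m^{n-1} \cdot O\!\left(n^{d-m}\right) \cdot n^2 = O\!\left(n^{d-m+3}\, m^{n-1}\right), $$
which is the desired bound.
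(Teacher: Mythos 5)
Your proposal is correct and follows essentially the same route as the paper, whose proof of this lemma is only a one-line sketch deferring to Lemma \ref{count cycles} and attributing the increase in the exponent of $n$ to the changed nature of $\si$. Your explicit bookkeeping --- recording the index $v_0$ with no $\si$-preimage and the index $v_1$ with two preimages, contributing the factor $n^2$ that raises $n^{d-m+1}$ to $n^{d-m+3}$ --- is exactly the detail the paper leaves implicit.
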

\begin{proof}
  Similar to the argument used to prove Lemma \ref{count cycles}.  Note that the increase in the exponent of $n$ is due to the small change in the nature of the map $\si$ in the definition of partition (Definition \ref{def:partition}).\\
\end{proof}

Finally, we find a suitable covering of $E_{m, d}$ and get an upper bound on its Hausdorff Dimension, as desired.
\begin{thm}
Let $m, d \in \N$ with  $1 < d$ and $1 \le m \le d$. Then 
$$dim_H(E_{m, d}) \le \frac {\log m}{\log d}$$
\end{thm}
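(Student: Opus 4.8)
The plan is to bound the dimension from above by exhibiting, for each $q \in \N$, an efficient covering of $E_{m,d}$ by the $d^{-q}$-cylinders $I_{b_1\dots b_q} = \left[(0.b_1\dots b_q)_d,\ (0.b_1\dots b_q)_d + d^{-q}\right]$ and controlling how many of them must be used. Since $E_{m,d}$ is the closure of the degree-$m$ cycle points, any $x \in E_{m,d}$ agrees in its first $q$ base-$d$ digits with a sufficiently nearby degree-$m$ cycle point, so it suffices to include only those cylinders whose index $b_1\dots b_q$ occurs as the length-$q$ prefix of the base-$d$ expansion of some point lying in a degree-$m$ cycle. Thus the number $K(q)$ of cylinders needed is at most the number of distinct length-$q$ prefixes of degree-$m$ cycle points, and the whole problem reduces to the estimate $K(q) \le \mathrm{poly}(q)\cdot m^{q}$.

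First I would dispose of the short-period contribution: prefixes arising from degree-$m$ cycles of size $n \le q$ are controlled by (cycle, starting shift) pairs, of which Lemma \ref{count cycles} gives at most $\sum_{n \le q} n\cdot O(n^{d-m+1}m^{n-1}) = \mathrm{poly}(q)\,m^{q}$. The substantive case is $n > q$, where a single prefix may come from arbitrarily long cycles; this is exactly where precycles enter. To each prefix $b_1\dots b_q$ of a degree-$m$ cycle point I would associate the precycle $C_P$ obtained as the forward orbit of the terminating rational $(0.b_1\dots b_q)_d$, namely $\{(0.b_k\dots b_q)_d : 1 \le k \le q\}\cup\{0\}$, which has at most $q+1$ elements. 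Marking the generating point $(0.b_1\dots b_q)_d$ inside $C_P$ recovers the prefix, so $b_1\dots b_q \mapsto (C_P,\text{marked point})$ is injective, whence $K(q)$ is at most $(q+1)$ times the number of precycles of size at most $q+1$ and degree at most $m$. Lemma \ref{count precycles} then gives $K(q) \le (q+1)\sum_{m'\le m}\sum_{n\le q+1}O(n^{d-m'+3}m'^{\,n-1}) = \mathrm{poly}(q)\,m^{q}$, as required.

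The main obstacle is the degree control hidden in this step: I must show that the truncation precycle satisfies $\deg(C_P) \le m$, i.e.\ that passing from a degree-$m$ cycle point to the forward orbit of its length-$q$ truncation does not create extra crossings. The danger is genuine, since the points $(0.b_k\dots b_q)_d$ are truncations of \emph{different} lengths, so their relative order could in principle differ from that of the true cycle points and spawn new wrap-arounds. I would handle this combinatorially through crossings: a crossing of $C_P$ forces a first-digit change between circularly consecutive points, and one checks that each such change is inherited from a crossing already present in the original cycle (the newly adjoined fixed point $0$ contributes nothing, since $d\cdot 0 \equiv 0$ falsifies the crossing inequality), giving $\eta(C_P) \le \eta(C) = m$; Lemma \ref{cross, dig} can be used to keep this digit bookkeeping finite.

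Finally, with $K(q) \le \mathrm{poly}(q)\,m^{q}$ established, I would close with the dimension computation. For any $\eps > 0$ set $\delta = \frac{\log m}{\log d} + \eps$; covering $E_{m,d}$ by the $K(q)$ cylinders of length $d^{-q}$ gives
\[
\sum_i r_i^{\delta} \le K(q)\,(d^{-q})^{\delta} \le C\,\mathrm{poly}(q)\,m^{q}\,d^{-q\delta} = C\,\mathrm{poly}(q)\,d^{-q\eps} \xrightarrow[q\to\infty]{} 0,
\]
using $d^{(\log m)/\log d} = m$. Hence $\dim_H(E_{m,d}) \le \frac{\log m}{\log d} + \eps$ for every $\eps>0$, which yields the claimed upper bound and, combined with the lower bound theorem, the exact value asserted in Theorem 2.
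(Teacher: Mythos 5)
Your covering skeleton, marking trick, and final computation all mirror the paper's proof (which likewise covers $E_{m,d}$ by roughly $m^n$ sets of size $d^{-n}$ attached to precycles of degree $\le m$ and size $\le n$, counted by Lemma \ref{count precycles}), but the step you yourself flag as the main obstacle --- that the truncated orbit $C_P=\{(0.b_k\dots b_q)_d : 1\le k\le q\}\cup\{0\}$ of a prefix of a degree-$m$ cycle point satisfies $\deg(C_P)\le m$ --- is a genuine gap, and the one-line justification you offer for it is not correct as stated. The assertion that ``each first-digit change is inherited from a crossing already present in the original cycle'' fails because variable-depth truncation perturbs \emph{both} the circular order of the points \emph{and} the relative order of their images. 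Concretely, take $d=4$ and a cycle point whose expansion begins $b_1\cdots b_9=1,0,0,0,2,0,0,0,2$ with $b_{10}\ge 1$: among the true cycle points one has $dc^{(1)}=(0.00020002b_{10}\cdots)_4 < (0.0002b_{10}\cdots)_4=dc^{(5)}$, so this pair exhibits no image flip in $C$, yet after truncating at $q=9$ the images satisfy $0<x_6=(0.0002)_4<x_2=(0.00020002)_4$, i.e.\ the image order of the corresponding truncated pair $x_1<x_5$ \emph{has} flipped. Moreover the extremes of the digit blocks of $C_P$ (e.g.\ the short suffixes $(0.2)_4$, $(0.02)_4$) need not come from extremes of digit blocks of $C$, so the adjacency structure entering the crossing count is also not inherited. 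Finally, Lemma \ref{cross, dig} cannot rescue the bookkeeping: it only gives $\eta(C_P)\le dig(C_P)$, and $dig(C_P)$ can be as large as $d$, not $m$. So nothing in your outline proves $\eta(C_P)\le\eta(C)$, and it is not even clear the statement is true; a proof would have to rule out configurations like the window $1,2,0,3,0,0,1$ (for $d=4$, $q=7$), whose truncated precycle one checks has two crossings, occurring inside a degree-one cycle.

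This gap is fatal if unrepaired: without degree control, your injection from prefixes only lands in precycles of degree $\le d$, of which there are on the order of $d^{q}$, and the covering sum then yields only the trivial bound $\dim_H(E_{m,d})\le 1$. The paper closes exactly this hole by a different device, never truncating digit strings: it takes the degree-$m$ map $f$ agreeing with the $d$-map on the cycle $C$, uses the expansion $|(f^n)'(c)|=d^n$ (following McMullen) to produce a preperiodic point within $O(d^{-n})$ of each $c\in C$, so that every point of $E_{m,d}$ lies in an $O(d^{-n})$-neighborhood of the $O(n^{d-m+4}m^{n})$ points of genuine degree-$\le m$ precycles counted by Lemma \ref{count precycles}; the degree bound is then automatic from the dynamics of $f$ rather than something to be verified combinatorially after truncation. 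If you replace your truncation step by such a dynamical construction of the nearby short precycle (or supply an actual proof that truncation cannot raise the degree), the rest of your argument --- the $(q+1)$-fold marking, the sum over $m'\le m$ and $n\le q+1$, and the limit $\mathrm{poly}(q)\,d^{-q\eps}\to 0$ --- is sound and matches the paper's conclusion.
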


\begin{proof}
Fix a positive integer $n > 1$, and let 
$$\s C_P(n) = \{C_P | C_P \text{ is a precycle for $d$-map}, deg(C_P) \le m, |C_P| \le n\}.$$
From Lemma \ref{count precycles}, we have $|\s C_P|$ is at most $O(n^{d-m+4} m^n)$ (note the change in exponent due to the inequality).

Let $C$ be a degree-$m$ cycle for $d$-map, and $c \in C$. There exists a degree-$m$ map $f: S^1 \rightarrow S^1$ which agrees with $d$-map on $C$. Letting $f^n$ denote $f$ composed $n$ times, we note that $|(f^n)'(c)| = d^n$. See McMullen  ~\cite{mcmullen2010dynamics} for details. It follows there exists a point $x$ in $O(d^{-n})$ neighborhood of $c$ for which two of $x, f(x), f^2(x), ..., f^n(x)$ coincide. Generalizing, each point of any degree-$m$ cycle for the $d$-map lies in $O(d^{-n})$ neighborhood of an element in $\s C_P$. Thus, $E_{m, d}$  lies in a $O(d^{-n})$ neighborhood of $\s C_P$ which has at most $O(n^{d-m+4} m^{n})$ elements.\\

Let $\be = \frac{\log m}{\log d} + \eps$ for some $\eps > 0.$
\begin{align*}
\lim_{n \to \infty} {{(d^{-n})}^\be n^{d-m+4} m^{n}} &= \lim_{n \to \infty}{(d^{\frac {\log m}{\log d}})^{-n}d^{-n\eps} n^{d-m+4} m^{n}}\\
&=  \lim_{n \to \infty}{d^{-n\eps} n^{d-m+4}}\\
&= 0
\end{align*}
For any $\be > \frac {\log m}{\log d}$, we can cover $E_{m, d}$ such that the summation of $\be$ powers of the lengths of the intervals in the cover is as small as we like. So,  $dim_H(E_{m, d}) \le \frac{\log m}{\log d}$.\\

\end{proof}

We are now ready to prove Theorem 2.
\addtocounter{thm}{-3}
\begin{thm}
  Let $m, d \in \N$ with  $1 < d$ and $1 \le m \le d$. Then,
  $$dim_H(E_{m, d}) = \frac {\log m}{\log d}.$$
  \end{thm}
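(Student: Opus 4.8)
The plan is to obtain Theorem 2 as an immediate consequence of the two one-sided bounds already established, which together pin down the Hausdorff dimension exactly.

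First I would invoke the lower bound proved in Section 2: by exhibiting the Cantor-type subset $A_{m,d} \subset E_{m,d}$ consisting of all points whose base-$d$ expansion uses only the digits $0, 1, \dots, m-1$ (whose membership in $E_{m,d}$ is guaranteed by Lemma \ref{condition E_m,d}), and computing $dim_H(A_{m,d}) = \frac{\log m}{\log d}$ via a covering argument for the upper estimate and an interval-counting argument for the lower estimate, one gets
\[
dim_H(E_{m,d}) \ge dim_H(A_{m,d}) = \frac{\log m}{\log d}.
\]

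Next I would invoke the upper bound proved in Section 3: counting degree-$m$ precycles of cardinality at most $n$ (Lemma \ref{count precycles}), and using that every point of a degree-$m$ cycle lies within an $O(d^{-n})$ neighborhood of such a precycle, one covers $E_{m,d}$ by $O(n^{d-m+4} m^n)$ sets of diameter $O(d^{-n})$. Choosing $\beta = \frac{\log m}{\log d} + \eps$ makes the associated $\beta$-sum tend to $0$, yielding
\[
dim_H(E_{m,d}) \le \frac{\log m}{\log d}.
\]

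Combining the two displays sandwiches $dim_H(E_{m,d})$ between equal quantities, forcing equality $dim_H(E_{m,d}) = \frac{\log m}{\log d}$ for every admissible pair $1 \le m \le d$ with $d > 1$; the boundary case $m = d$ is moreover consistent with the Proposition $E_{d,d} = S^1$. I do not anticipate any genuine obstacle at this final step, since it is a pure conjunction of prior results: the real content of the argument already resides in the counting estimates for the upper bound (Lemma \ref{count cycles} and Lemma \ref{count precycles}) and in the interval-counting lower bound for $A_{m,d}$. The only point worth double-checking is that both inequalities were stated uniformly over the full admissible range of $m$ and $d$, so that no case is left uncovered when the two bounds are merged into a single equality.
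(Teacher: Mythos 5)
Your proposal is correct and matches the paper exactly: the paper's proof of Theorem 2 is precisely the conjunction of Theorem 3 (the lower bound via the Cantor-type set $A_{m,d}$) and Theorem 4 (the upper bound via the precycle-counting cover). No gap to report.
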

\begin{proof}
  Follows from Theorems 3 and 4.
\end{proof}

\ \\
\section*{Acknowledgment}
The authors would like to thank Prof. Curtis McMullen for suggesting the problems and giving valuable comments.  This project was undertaken by M.T. in SPUR (the Summer Program in Undergraduate Research) at MIT Mathematics Department in Summer 2014. M.T. would like to thank Prof. David Jerison and Prof. Pavel Etingof for their guidance and insightful discussions during the program. We also thank our mentor Xuwen Zhu for her help and support for completing this paper.

\nocite{*}
\bibliography{dMap}
\bibliographystyle{amsplain}

\end{document}